\documentclass[11pt]{amsart}
\usepackage{amsfonts, latexsym, amsmath,amstext,amsthm,amssymb,amsxtra}
\usepackage{verbatim,hyperref}
\usepackage{enumitem}
\linespread {1.2}
\headsep 16pt            

\usepackage[margin=1.2in]{geometry}

\bibliographystyle{plain}
\newtheorem{thm}{Theorem}
\newtheorem{thma}{Theorem}

\newtheorem{clm}{Claim}

\newenvironment{customclm}[1]
  {\innercustomclm}
  {\endinnercustomclm}

\newtheorem{lem}{Lemma}[section]
\newtheorem{rmk}{Remark}[section]
\newtheorem{prop}{Proposition}

\newtheorem{obs}{Observation}[section]
\newtheorem*{obs*}{Observation}

\newtheorem*{cor*}{Corollary}

\newcommand{\ds}{\displaystyle}
\newcommand{\norm}[1]{\left\Vert#1\right\Vert}

\newcommand{\ptwo}[2]{\frac{\partial^{2}}{\partial #1\ \partial #2}}
\newcommand{\pone}[1]{\frac{\partial}{\partial #1}}

\newcommand{\sinc}{\mathrm{sinc}}
\newcommand{\flip}[1]{\mathrm{flip} \{#1\}}

\newcommand{\ov}{\overline}


\newcommand{\FF}{{\mathcal F}}
\newcommand{\R}{\mathbb R}
\newcommand{\C}{\mathbb C}
\newcommand{\E}{\mathbb E}
\newcommand{\N}{\mathbb N}
\newcommand{\Pro}{\mathbb P}
\newcommand{\lm}{\lambda}

\newcommand{\g}{\gamma}
\newcommand{\al}{\alpha}
\newcommand{\ep}{\varepsilon}
\newcommand{\p}{\varphi}

\newcommand{\var}{\mathrm{var}\,}
\newcommand{\cov}{\mathrm{cov}\,}
\newcommand{\intt}{\int_{-T}^T}
\newcommand{\ind}{1{\hskip -2.5 pt}\hbox{I}}
\newcommand{\V}{V^{a,b}_f(T)}
\newcommand{\D}{\Delta}

\newcommand{\diag}{\mathrm{Diag}_\ep}
\newcommand{\indep}{\ind\{(\lm,\tau):\, |\lm-\tau|<\ep\}}

\newcommand{\calN}{\mathcal{N}}
\newcommand{\M}{\mathcal{M}(\R)}
\newcommand{\Mplus}{\mathcal{M}^{+}(\R)}

\title[Variance of Zeroes of Shift-Invariant GAFs]{Variance of the Number of Zeroes of Shift-Invariant Gaussian Analytic Functions}

\author[N. D. Feldheim]{Naomi Dvora Feldheim}
\thanks{Department of Mathematics, Stanford University.
Email: naomifel@stanford.edu. \\
Research supported by the Science Foundation of the Israel Academy
of Sciences and Humanities, grant 166/11; by the United States
- Israel Binational Science Foundation, grant 2012037; and by a National Science Foundation postdoctoral fellowship grant.
}

\begin{document}
\subjclass[2000]{30E99, 60G15, 60G10} \keywords{Gaussian Analytic function,
Stationary Gaussian process, fluctuations of zeroes} \maketitle
\begin{abstract}
Following Wiener, we consider the zeroes of Gaussian analytic functions in a
strip in the complex plane, with translation-invariant distribution. We show
that the variance of the number of zeroes in a long horizontal rectangle
$[-T,T] \times [a,b]$ is asymptotically between $cT$ and $C T^2$, with
positive constants $c$ and $C$. We also supply with conditions (in terms of
the spectral measure) under which the variance asymptotically grows linearly with $T$,
as a quadratic function of $T$, or has intermediate growth.
\end{abstract}

\section{Introduction}\label{sec: intro}
Consider complex Gaussian Analytic Functions (GAFs) whose distribution is invaruiant under real shifts. Paley and Wiener dedicate the last chapter of their celebrated treatise~\cite[Ch. X]{PW} to study the asymptotic number of zeroes of such functions in a long horizontal rectangle $[-T,T] \times [a,b]$ as $T$ tends to infinity. They show that under certain spectral conditions this quantity satisfies a law of large numbers. This result was extended in~\cite{Naomi}, where unnecessary conditions were removed, and a formula for the expected number of zeroes was developed.

This paper is devoted to the study of the fluctuations in the number of zeroes in the same setting. We study the asymptotic growth of the variance $V(T)$ of the number of zeroes in
$[-T,T]\times [a,b]$.
In particular we identify which processes demonstrate quadratic growth of $V(T)$ (Theorem~\ref{thm: var quadratic}), and supply sufficient and necessary conditions for $V(T)$ to be asymptotically linear (Theorem~\ref{thm: var linear} and~\ref{thm: var super} respectively). In Theorem~\ref{thm: var linear} we also prove that $V(T)$ is always at least linear in $T$. 
Obtaining lower bounds on the variance is the most difficult part of this paper (as was the case in related models, see discussion below).

\subsection{Discussion}\label{sec: disc}

Zeroes of random Gaussian functions in increasing domains have been studied in various settings.
Generally analyizing fluctuations is much harder than calculating expectations and
even than obtaining central limit theorems. 
 For \emph{real Gaussian stationary processes}, a formula for the expectation was obtained in the early works of Kac~\cite{Kac} and Rice~\cite{Rice} from the 1940's. Cramer and Leadbetter~\cite{CrLead} later obtained an asymptotic formula for the variance, but the rate of growth could not be inferred from it. 
In 1976 Cuzick~\cite{Cuz} was able to show that, under some technical assumptions, if the growth of the variance is linear then the number of zeroes in $[-T,T]$ satisfies a Central Limit Theorem (CLT). It was only fifteen years later that Slud~\cite{Slud} obtained
accessible conditions (in terms of the covariance kernel) for this assumption to hold. 
To do so Slud used primarily a sophisticated method for stochastic integration which he developed jointly with Chambers~\cite{ChamSlud}.
Unfortunately, this method involves many computations specifically tailored to tackle the real case, which do not generalize
to the complex setting easily. Consequently, while some of our results for the complex case are very similar to those of Slud (see Remark~\ref{rmk: Slud} below), our methods are
quite different, and, in a sense, more elementary.

Similar methods to those of Cuzick and Slud were recently used by Granville and Wigman~\cite{GW}, to
study the number of zeroes of a \emph{Gaussian trigonometric polynomial} of large degree $N$ in the
interval $[0, 2\pi]$. They showed that the variance of this number is linear in $N$ and that a CLT holds.

In the complex setting Sodin-Tsirelson~\cite{ST1} and Nazarov-Sodin~\cite{SN} studied the unique \emph{planar GAF} whose zeroes are invariant under all planar isometries.
They showed linear growth of the variance and a CLT for the zeroes in large balls (as the radius approaches infinity), using diagram counting methods.

In contrast to these works, this paper requires but standard tools from harmonic analysis, classical analysis and probability.
In order to acquire our results we derive an asymptotic formula for $V(T)$ (Proposition~\ref{prop: var formula}). This formula consists of a series of non-negative terms involving the spectral measure, and is therefore relatively easy to analyse. 
Ideas from this paper were already used for studying the winding number of Gaussian functions from $\mathbb{R}$ to $\mathbb{C}$, in our work with Buckley~\cite{BF}.

\subsection{Definitions}
A \emph{Gaussian Analytic Function (GAF)} in the strip $D=D_\D=\{z: \,
|\text{Im}z|<\Delta\}$ is a random variable taking values in the space of
analytic functions on $D$, so that for every $n\in\N$ and every $z_1,\dots,
z_n\in D$ the vector $(f(z_1),\dots,f(z_n))$ has a mean zero complex Gaussian
distribution.

A GAF in $D$ is called {\em stationary}, if it is distribution-invariant with
respect to all horizontal shifts, i.e., for any $t\in\mathbb R$, any
$n\in\mathbb N$, and any $z_1,\dots,z_n\in D$, the random $n$ -tuples
\[
\bigl( f(z_1),\dots,f(z_n) \bigr) \qquad {\rm and} \qquad \bigl(
f(z_1+t),\dots,f(z_n+t) \bigr)
\]
have the same distribution.

For a stationary GAF, the covariance kernel
$$K(z,w)=\E \{ f(z)\overline{f(w)} \} $$
may be written as
\begin{equation*}
K(z,w)=r(z-\overline w).
\end{equation*}
For $t\in \R$, the function $r(t)$ is positive-definite and continuous, and
so it is the Fourier transform of some positive measure $\rho$ on the real
line:
$$r(t) = \FF[\rho](t)=\int_\R e^{-2\pi i t \lambda}d\rho(\lambda).$$
Moreover, since $r(t)$ has an analytic continuation to the strip $D_{2\D}$,
$\rho$ must have a finite exponential moment:
\begin{equation}\label{eq: cond L1}
\text{for each }|\Delta_1|<\Delta,\:\: \int_{-\infty}^{\infty}
e^{2\pi \cdot 2\Delta_1 |\lambda|} d\rho(\lambda) < \infty\,.
\end{equation}
The measure $\rho$ is called the \emph{spectral measure} of $f$. A stationary
GAF is \emph{degenerate} if its spectral measure consists of exactly one
atom.

For a holomorphic function $f$ in a domain $D$, we denote by $Z_f$ the
zero-set of $f$ (counted with multiplicities), and by $n_f$ the
\emph{zero-counting measure}, i.e.,
\[
\forall \p \in C_0(D): \qquad \int_D \p(z) dn_f(z) = \sum_{z\in
Z_f} \p(z),
\]
where $C_0(D)$ is the set of compactly supported continuous functions on $D$.
We use the abbreviation $\displaystyle n_f(B)=\int_B dn_f(z)$ for the number
of zeroes in a Borel subset $B\subset D$.

\subsection{Results}
First, we present a previous result which will serve as our starting point.
This result can be viewed as a ``law of large numbers'' for the zeroes of
stationary functions.

\begin{thma}\label{thm: LLN}\cite[Theorem 1]{Naomi}
Let $f$ be a stationary non-degenerate GAF in the strip $D_\Delta$, where
$0<\Delta\leq\infty$. Let $\nu_{f,T}$ be the non-negative locally-finite
random measure on $(-\Delta,\Delta)$ defined by
\begin{equation*}
    \displaystyle \nu_{f,T}(Y)=\frac{1}{2T}\,n_f([-T,T)\times Y), \:\:Y\subset (-\Delta,\Delta) \text{ measurable.}
\end{equation*}

Then:

\smallskip\par\noindent{\rm (i)}
Almost surely, the measures $\nu_{f,T}$ converge weakly and on every interval
to a measure $\nu_f$ when $T\rightarrow \infty$.

\smallskip\par\noindent{\rm (ii)}
The measure $\nu_f$ is not random (i.e. $\operatorname{var} \nu_f =0 $) if
and only if the spectral measure $\rho_f$ has no atoms.

\smallskip\par\noindent{\rm (iii)}
If the measure $\nu_f$ is not random, then $\nu_f(Y)=\E n_f([0,1]\times Y)$ and it has density:\\
\begin{equation*}
L(y) =\frac{d}{dy} \left(\frac{\int_{-\infty}^\infty \lambda e^{4\pi
y\lambda}d\rho(\lambda)} {\int_{-\infty}^\infty e^{4\pi
y\lambda}d\rho(\lambda)}\right)= \frac 1 {4\pi} \frac{d^2}{dy^2}
\log \left(r(2iy)\right).
\end{equation*}
\end{thma}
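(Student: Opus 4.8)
The plan is to regard the zero-counting measure as a stationary observable of the horizontal shift flow $S_tf(\cdot)=f(\cdot+t)$, which acts in a measure-preserving way on the underlying probability space precisely because $f$ is stationary, and to combine two classical inputs: the Edelman--Kostlan formula for the first intensity of zeroes of a GAF, and the spectral characterization of ergodicity for stationary Gaussian processes. Fix a measurable $Y\subset(-\D,\D)$ and set $N_Y(f)=n_f([0,1)\times Y)$, the number of zeroes in one horizontal unit cell. For integer $T$ the quantity $\frac{1}{2T}\,n_f([-T,T)\times Y)$ is then exactly the Birkhoff average $\frac{1}{2T}\sum_{k=-T}^{T-1}N_Y(S_kf)$ for the time-one map, and for non-integer $T$ the two differ by a single boundary cell contributing $O(1/T)$. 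To run the ergodic theorem I first need $\E N_Y(f)<\infty$; this follows from the Edelman--Kostlan formula, which gives $\E N_Y(f)=\int_Y \frac{1}{4\pi}\frac{d^2}{dy^2}\log r(2iy)\,dy$, a finite quantity by the exponential-moment condition \eqref{eq: cond L1}. Birkhoff's pointwise ergodic theorem then yields almost-sure convergence of these averages to $\nu_f(Y)=\E[\,N_Y(f)\mid\mathcal I\,]$, where $\mathcal I$ is the invariant $\sigma$-algebra of the flow. Applying this along a countable family of intervals with rational endpoints and using monotonicity in $Y$ upgrades the pointwise statements to the simultaneous weak convergence, and convergence on every interval, claimed in (i).

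For (ii) I would identify non-randomness of $\nu_f$ with ergodicity of the flow. If $\rho$ has no atoms, the classical ergodicity criterion for stationary Gaussian processes (due to Maruyama and Fomin) asserts that the shift flow is ergodic, so $\mathcal I$ is trivial and $\nu_f(Y)=\E N_Y(f)$ is deterministic for every $Y$; hence $\var\nu_f=0$. For the converse, suppose $\rho$ carries an atom of mass $p>0$ at a frequency $\lambda_0$. Then $f$ decomposes as $f(z)=\zeta\sqrt{p}\,e^{-2\pi i\lambda_0 z}+h(z)$, where $\zeta$ is a standard complex Gaussian independent of the GAF $h$ whose spectral measure is $\rho$ with the atom removed. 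Under $S_t$ the exponential term is only multiplied by the unimodular factor $e^{-2\pi i\lambda_0 t}$, so $|\zeta|$ is invariant and genuinely random; the task is to show that this invariant actually influences the limiting intensity, so that $\nu_f$ is non-deterministic.

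The crux --- and the step I expect to be the main obstacle --- is precisely this converse. Conditioning on $\mathcal I$, which when $\rho$ has atoms is generated by the moduli of the atomic amplitudes, the limiting measure $\nu_f$ should be the phase-averaged first-intensity of the field $\zeta\sqrt{p}\,e^{-2\pi i\lambda_0 z}+h$ with $|\zeta|$ frozen, the averaging over the phase coming from the equidistribution produced by the horizontal shifts. Because this field is a Gaussian field perturbed by a random-phase complex exponential of prescribed modulus, its intensity is no longer given by the unconditional Edelman--Kostlan expression and varies with $|\zeta|$. One must verify that this dependence is nondegenerate, that is $\var\nu_f>0$, which requires quantitative control of how an added exponential term displaces the expected zero density; this is where the real analytic work lies.

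Part (iii) is then a short computation. Once $\nu_f$ is known to be deterministic it equals its mean $\E N_Y(f)=\nu_f(Y)$, and writing $r(2iy)=\int_\R e^{4\pi y\lambda}\,d\rho(\lambda)$ gives $\frac{d}{dy}\log r(2iy)=4\pi\,\dfrac{\int_\R\lambda e^{4\pi y\lambda}\,d\rho}{\int_\R e^{4\pi y\lambda}\,d\rho}$, so the two displayed expressions for $L(y)$ coincide and both equal $\frac{1}{4\pi}\frac{d^2}{dy^2}\log r(2iy)$, as required.
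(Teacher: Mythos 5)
This theorem is not proved in the present paper at all: it is imported verbatim from the author's earlier work \cite[Theorem 1]{Naomi} and serves only as the starting point here, so there is no in-paper argument to compare against. Judged on its own merits, your proposal contains a genuine gap. Parts (i) and (iii) are fine in outline and use the natural ingredients (Birkhoff's theorem for the shift, the Fomin--Grenander--Maruyama criterion, and the Edelman--Kostlan formula, whose finiteness follows from \eqref{eq: cond L1}). But in part (ii) you prove only one direction. The converse --- that an atom in $\rho$ forces $\var \nu_f>0$ --- is exactly where you stop: you decompose $f(z)=\zeta\sqrt{p}\,e^{-2\pi i\lambda_0 z}+h(z)$, observe that $|\zeta|$ is a nontrivial shift-invariant random variable, and then state that ``one must verify'' that the limiting zero intensity genuinely depends on $|\zeta|$. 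That verification is the entire substance of this direction of the theorem: the mere existence of a nonconstant invariant random variable does not imply that the particular invariant functional $\nu_f$ is nonconstant (it is measurable with respect to the invariant $\sigma$-algebra, but could a priori still be degenerate). As written, (ii) is half-proved, and the half that is missing is the hard half.

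Two further cautions about the parts you do argue. First, you reduce to the time-one map, but Birkhoff's theorem then gives convergence to $\E[N_Y\mid\mathcal I_1]$ where $\mathcal I_1$ is the invariant $\sigma$-algebra of the \emph{time-one map}, which can be strictly larger than that of the flow; to conclude determinism in the non-atomic case you need ergodicity of the time-one map itself, not of the flow. For Gaussian processes this does hold (a non-atomic spectral measure yields weak mixing of the flow, hence ergodicity of every time-$t$ map, $t\neq 0$), but it must be invoked explicitly; alternatively, run the continuous-time ergodic theorem and avoid the issue. Second, your heuristic description of $\nu_f$ in the atomic case via ``phase equidistribution'' breaks down for the time-one map when $\lambda_0$ is rational (the phases $e^{-2\pi i\lambda_0 k}$ visit only finitely many points) and fails altogether for an atom at $\lambda_0=0$ (no phase rotation at all), so even the structural claim about the conditional intensity on which your unproved step rests would need to be reformulated before it could be established.
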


In the above and in what follows, the term ``density'' means the Radon-Nikodym
derivative w.r.t. the Lebegue measure on $\R$.

A natural question is, how big are the fluctuations of the number of zeroes
in a long rectangle? More rigorously, define
$$R^{a,b}_T=[-T,T)\times[a,b],\:\: V^{a,b}_f(T)=\var\left[n_f(R^{a,b}_T)\right],$$
where for a random variable $X$ the variance is defined by
$$\var(X) = \E \left(X-\E X\right)^2 .$$
 We are interested in the asymptotic behavior of $\V$ as $T$ approaches
infinity. The next theorems show that $\V$ is asymptotically bounded
between $cT$ and $C T^2$ for some $c,C>0$, and give conditions under which
each of the bounds is achieved. We begin by stating the upper bound result, a
relatively easy consequence of Theorem~\ref{thm: LLN}.

\begin{thm}\label{thm: var quadratic}
Let $f$ be a non-degenerate stationary GAF in a strip $D_\D$. Then for all
$-\D<a<b<\D$ the limit
  $$\ds L_2=L_2(a,b):= \lim_{T\to\infty}\frac{\V}{T^2} \in [0,\infty)$$
  exists. This limit is positive if and only if the spectral measure of $f$
  has a non-zero discrete component.
\end{thm}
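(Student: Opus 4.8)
The plan is to read off $L_2$ directly from the limiting measure $\nu_f$ supplied by Theorem~\ref{thm: LLN}. Since $R^{a,b}_T=[-T,T)\times[a,b]$ and $\nu_{f,T}([a,b])=\frac{1}{2T}n_f(R^{a,b}_T)$ by definition,
$$\frac{\V}{T^2}=\frac{\var\bigl[2T\,\nu_{f,T}([a,b])\bigr]}{T^2}=4\,\var\bigl[\nu_{f,T}([a,b])\bigr].$$
By Theorem~\ref{thm: LLN}(i) the random variables $Y_T:=\nu_{f,T}([a,b])$ converge almost surely to $Y:=\nu_f([a,b])$. If I can upgrade this to convergence in $L^2$, the variances converge and I obtain the existence of the limit together with the formula $L_2(a,b)=4\,\var\bigl[\nu_f([a,b])\bigr]\in[0,\infty)$.

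For the $L^2$ upgrade it suffices to show that $\{Y_T^2\}_{T\ge 1}$ is uniformly integrable, which I get from a uniform moment bound. Covering $[-T,T)$ by $N=\lceil 2T\rceil$ unit intervals and using that $n_f$ is a positive measure, $2T\,Y_T=n_f(R^{a,b}_T)\le\sum_{j}X_j$, where $X_j:=n_f([j,j+1)\times[a,b])$ are identically distributed by stationarity. The zero count of a GAF in a fixed box compactly contained in $D_\D$ has finite moments of all orders (via Jensen's formula and the Gaussian tails of $\log|f|$), so $\E[X_0^{2+\delta}]<\infty$ for any $\delta>0$. Writing $Y_T\le \frac{N}{2T}\cdot\frac{1}{N}\sum_j X_j$ and applying convexity of $t\mapsto t^{2+\delta}$ gives $\sup_{T\ge 1}\E[Y_T^{2+\delta}]<\infty$. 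Hence $\{Y_T^2\}$ is uniformly integrable, $Y_T\to Y$ in $L^2$, and $\var[Y_T]\to\var[Y]$, which proves the first assertion.

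It remains to decide when $L_2=4\,\var[\nu_f([a,b])]$ is positive. If $\rho$ has no atoms, Theorem~\ref{thm: LLN}(ii) says $\nu_f$ is deterministic, so $\var[\nu_f([a,b])]=0$ and $L_2=0$. For the converse I would analyze $\nu_f$ through the mean winding rate (mean motion) of $f$ on horizontal lines: by the argument principle applied to $\partial R^{a,b}_T$ the vertical edges contribute $O(1)$, and $\nu_f([a,b])=\frac{1}{2\pi}\bigl(c(a)-c(b)\bigr)$, where $c(y)=\lim_{T\to\infty}\frac{1}{2T}\bigl[\arg f(T+iy)-\arg f(-T+iy)\bigr]$ is the a.s. mean motion at height $y$. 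Atoms in $\rho$ are exactly the frequencies carrying a non-decaying random amplitude, and they make $c(y)$ random. The model case of two atoms $\lm_1<\lm_2$ with masses $p_1,p_2$ is instructive: with $\zeta_1,\zeta_2$ independent standard complex Gaussians, $f(x+iy)=e^{-2\pi i\lm_1 x}\bigl[\sqrt{p_1}\zeta_1 e^{2\pi\lm_1 y}+\sqrt{p_2}\zeta_2 e^{2\pi\lm_2 y}e^{-2\pi i(\lm_2-\lm_1)x}\bigr]$, whose mean motion equals $-2\pi\lm_1$ or $-2\pi\lm_2$ according to which amplitude dominates, so that $\nu_f([a,b])=(\lm_2-\lm_1)\,\ind\{a<y^*<b\}$ with random threshold $y^*=\frac{1}{2\pi(\lm_2-\lm_1)}\log\frac{\sqrt{p_1}\,|\zeta_1|}{\sqrt{p_2}\,|\zeta_2|}$. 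Because $y^*$ has a continuous distribution with full support on $\R$, one has $\Pro(a<y^*<b)\in(0,1)$ for every $a<b$, whence $\var[\nu_f([a,b])]>0$.

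The main obstacle is to make this last step work in general, i.e. to show that the randomness of $\nu_f$ (guaranteed by Theorem~\ref{thm: LLN}(ii) only as $\var\nu_f\neq 0$ on some test set) is necessarily felt on \emph{every} interval $[a,b]$. I expect to handle the general discrete-plus-continuous spectrum by the same mean-motion picture: the continuous part of $\rho$ contributes a deterministic drift to $c(y)$, while each atom injects a genuinely random, height-dependent contribution whose law has full support, so that $c(a)-c(b)$ cannot be almost surely constant for any $a<b$. Controlling the mean motion of the mixed almost-periodic/ergodic function $f(\cdot+iy)$ and verifying that the atomic randomness does not cancel over an interval is the delicate point; the two-atom computation above is the prototype that the general argument must reproduce.
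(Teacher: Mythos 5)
Your reduction $\V/T^2=4\,\var\bigl[\nu_{f,T}([a,b])\bigr]$ and your plan of upgrading the almost sure convergence from Theorem~\ref{thm: LLN} to convergence of variances via uniform integrability is exactly the paper's strategy for the existence part. The only difference is the tool: the paper invokes an Offord-type estimate (Proposition~\ref{prop: off}, a uniform exponential tail bound $\sup_{T\ge 1}\Pro(X_T>s)<Ce^{-cs}$ adapted from the prior work \cite{Naomi}), whereas you obtain uniform integrability of $Y_T^2$ from stationarity plus a finite $(2+\delta)$-moment of the zero count in a unit box, via the covering $n_f(R_T)\le\sum_j X_j$ and Jensen's inequality. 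This is correct and somewhat more elementary, granted the standard fact $\E\bigl[n_f([0,1)\times[a,b])^{2+\delta}\bigr]<\infty$ (Jensen's formula plus integrability of $\log|f|$), which is weaker than the exponential tail the paper uses.

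The divergence is in the positivity claim, and there your proposal has a genuine gap. The paper does not argue this direction at all: it reads off from Theorem~\ref{thm: LLN} the statement that $\var Z>0$ if and only if $\rho$ has an atom, where $Z=\lim_{T\to\infty}n_f(R_T)/T=2\nu_f([a,b])$ for the \emph{given} interval $[a,b]$; in other words, interval-wise positivity of the variance is treated as part of the quoted result, and the entire content of the paper's proof is the exchange of limit and variance. You instead read Theorem~\ref{thm: LLN}(ii) conservatively, as guaranteeing randomness of $\nu_f$ only on some unspecified test set, and you try to re-derive randomness on every interval through a mean-motion analysis. Your two-atom computation is correct and is indeed the right picture (it is consistent with the density formula in Theorem~\ref{thm: LLN}(iii)), but, as you acknowledge yourself, you do not complete the argument for a general spectral measure with a discrete component (several or infinitely many atoms plus a continuous part): the assertion that the atomic contribution makes $c(a)-c(b)$ non-constant for \emph{every} $a<b$ is precisely what needs proof, and it is not supplied. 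So, as a self-contained proof, the direction ``atoms $\Rightarrow L_2(a,b)>0$'' is missing.

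It is worth saying that this gap is one the paper closes by citation rather than by argument. If one accepts the paper's reading of Theorem~\ref{thm: LLN}(ii) --- that in the presence of atoms the limit $\nu_f([a,b])$ is a non-degenerate random variable for every interval --- then your mean-motion section is unnecessary and your proof is complete (and marginally more self-contained than the paper's, since it avoids Proposition~\ref{prop: off}). If one does not accept that reading, then the missing step must be proved, for instance by isolating the randomness carried by a single fixed atom after conditioning on the rest of the spectrum; your two-atom model is the prototype of such an argument, but the general case is not reducible to it without additional work.
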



The lower bound result, which is our main result, is stated in the following
theorem.
\begin{thm}\label{thm: var linear}
Let $f$ be a non-degenerate stationary GAF in a strip $D_\D$. Then for all
$-\D<a<b<\D$ the limit
  $$\ds L_1=L_1(a,b):=\lim_{T\to\infty}\frac{\V}{T} \in (0,\infty]$$
  exists. Moreover, the limit $L_1(a,b)$ is finite if $\rho$ is
    absolutely continuous with density $d\rho(\lm)=p(\lm)d\lm$, such that
\begin{align}\label{eq: cond L2}
(1+\lm^2) e^{2\pi \cdot 2y\lambda}p(\lambda) \in L^2 (\R), \text{  for } y\in\{a,b\}.
\end{align}
\end{thm}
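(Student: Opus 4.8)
The plan is to turn $\V$ into a one-dimensional spectral integral and to read off its growth from the behaviour of a non-negative measure at frequency zero. I would begin from the Riesz representation $n_f=\frac{1}{2\pi}\Delta_z\log|f|$ (with $\Delta_z=\partial_x^2+\partial_y^2$ the distributional Laplacian), so that for a smooth test function $\varphi$ one has $n_f(\varphi)=\frac{1}{2\pi}\int_D\log|f|\,\Delta_z\varphi$ and hence
\[
\var\bigl(n_f(\varphi)\bigr)=\frac{1}{4\pi^2}\iint \cov\bigl(\log|f(z)|,\log|f(w)|\bigr)\,\Delta_z\varphi(z)\,\Delta_w\varphi(w)\,dm(z)\,dm(w).
\]
For a centred complex Gaussian pair the logarithmic covariance is the explicit dilogarithm $\cov(\log|f(z)|,\log|f(w)|)=\tfrac14\,\mathrm{Li}_2\bigl(\tilde\rho(z,w)\bigr)$ with $\tilde\rho(z,w)=|r(z-\ov w)|^2/\bigl(r(z-\ov z)\,r(w-\ov w)\bigr)\in[0,1]$. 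Approximating $\mathbf 1_{R^{a,b}_T}$ by such $\varphi$ (the zeroes avoid the boundary almost surely, so this passage is legitimate) rewrites $\V$ as the double integral of $\Delta_z\Delta_w[\tfrac14\mathrm{Li}_2(\tilde\rho)]$ over $R^{a,b}_T\times R^{a,b}_T$; this is the content I would package as Proposition~\ref{prop: var formula}.

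Since $\tilde\rho(z,w)$ depends on the horizontal variables only through $s=x-x'$ (with $z=x+iy$, $w=x'+iy'$), integrating $x,x'$ over $[-T,T)$ produces the Fejér weight and collapses the formula to $\V=\int_{-2T}^{2T}(2T-|s|)\,h(s)\,ds$, where $h(s)$ is the integral over $[a,b]\times[a,b]$ (in $y,y'$) of $\Delta_z\Delta_w[\tfrac14\mathrm{Li}_2(\tilde\rho)]$. The function $h$ is a reduced covariance, hence positive-definite, so by Bochner $h=\FF[\mu]$ for a finite non-negative measure $\mu$, and
\[
\frac{\V}{T}=\int_\R \frac{\sin^2(2\pi T\lambda)}{\pi^2 T\lambda^2}\,d\mu(\lambda).
\]
The kernel here is a non-negative approximate identity of total mass $2$ concentrating at the origin, so for $\mu\ge 0$ the right-hand side converges in $[0,+\infty]$ to twice the symmetric density of $\mu$ at $\lambda=0$; this gives existence of $L_1$. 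The value is $+\infty$ precisely when that density is infinite — in particular whenever $\rho$ has a discrete component, since then Theorem~\ref{thm: var quadratic} gives $\V/T^2\to L_2>0$, forcing an atom of $\mu$ at the origin.

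For the finiteness claim under \eqref{eq: cond L2} I would show $h\in L^1(\R)$, whence $L_1=2\int_\R h<\infty$. The tail of $h$ is controlled by the leading dilogarithm term $\tilde\rho$ (higher powers decay faster), so it suffices to bound, after the two Laplacians, the $s$-integral of $|r(s+i(y+y'))|^2$ and its derivatives for $y,y'\in\{a,b\}$. Writing $r(s+ic)=\FF\bigl[e^{2\pi c\lambda}p(\lambda)\bigr](s)$ and invoking Plancherel, the weight $e^{2\pi(y+y')\lambda}$ together with the two derivatives (which cost a factor $1+\lambda^2$) reduces everything to the hypothesis $(1+\lambda^2)e^{2\pi\cdot 2y\lambda}p(\lambda)\in L^2$ for $y\in\{a,b\}$, the mixed case $y\ne y'$ being dominated by Cauchy–Schwarz; then $h$ is a product of two $L^2$ functions, hence integrable.

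The heart of the matter — and the hardest step — is the strict lower bound $L_1>0$, i.e. the claim that every non-degenerate stationary GAF has at least linearly growing variance. By the previous paragraph this amounts to showing that the symmetric density of $\mu$ at the origin is strictly positive, equivalently that the boundary field $\eta(x)=\partial_y\log|f(x+ib)|-\partial_y\log|f(x+ia)|$, whose variance controls $\V$ up to the $O(1)$ contribution of the vertical edges, carries genuine low-frequency mass. I expect this to require ruling out a conspiracy of cancellations in the dilogarithm expansion, and I would extract a single non-negative, non-vanishing term and bound it below using the strict positive-definiteness of $r$ that non-degeneracy (at least two points of increase of $\rho$) guarantees. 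Isolating such a term and proving it cannot be cancelled is where the main difficulty lies.
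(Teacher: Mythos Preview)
Your overall architecture is close to the paper's: reduce to a spectral formula via the log--covariance dilogarithm, then read off growth from a Fej\'er-type integral. Two points, however, are genuine gaps.

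\textbf{Finiteness under \eqref{eq: cond L2}.} The assertion that ``higher powers decay faster, so it suffices to bound the leading term $\tilde\rho$'' is not enough and in fact hides the main difficulty. After differentiation, the $k$-th term $\tfrac{1}{k^2}\partial_a\partial_b(q^k)$ produces a summand $\tfrac{1}{k}\,q^{k-1}q_{ab}$, so summing in $k$ requires control of $\sum_{k}\tfrac{1}{k}\int_\R q^{k-1}\,dx$. Since $q(0,t,t)=1$, there is no uniform bound $q\le \alpha<1$, and $\int_\R q^{k}\,dx$ does not decay for free. The paper obtains the rate $\int_\R q^{k}\,dx\lesssim k^{-1/2}$ via a local-CLT argument (Lemma~\ref{lem: CLT}) applied to the probability density $g^{a,b}(\lambda)=e^{2\pi(a+b)\lambda}p(\lambda)/r(ia+ib)$, which is exactly where hypothesis \eqref{eq: cond L2} enters (through $\int|\FF[g]|^{\nu}<\infty$). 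Without this, your $h\in L^1(\R)$ claim does not follow from $L^1$-control of the first term alone. Relatedly, your $h(s)=\int_a^b\!\int_a^b\Delta_z\Delta_w[\tfrac14\mathrm{Li}_2(\tilde\rho)]\,dy\,dy'$ involves the full diagonal $y=y'$ where $\tilde\rho=1$; $\mathrm{Li}_2'$ has a logarithmic singularity there, and you do not explain why the resulting integrand is even locally integrable. The paper sidesteps this by working only on the boundary lines $y\in\{a,b\}$ (via the argument principle), where the remaining diagonal singularity at $s=0$ is rendered harmless by the special identity $q_a(0,t,t)=0$ (Claim~\ref{clm: q}(4)).

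\textbf{Positivity $L_1>0$.} You correctly isolate this as the hard step, but misidentify the obstacle: every term in the expansion is a square (the paper's $h_k^{a,b}\ge 0$), so there is no ``conspiracy of cancellations'' to rule out. The actual work is to show that already the $k=1$ term contributes a positive amount. The paper does this by proving that $h_1^{a,b}(\lambda)$ has \emph{exactly two} real zeroes (Claim~\ref{clm: two zeroes}, via convexity of $y\mapsto\log r(2iy)$), then restricting $\rho$ away from those zeroes and invoking $\liminf_{\ep\to 0}\tfrac{1}{2\ep}\int\mu(\tau-\ep,\tau+\ep)\,d\mu(\tau)=\int|\FF[\mu]|^2>0$ for any nonzero finite measure $\mu$ (Lemma~\ref{lem: mu on balls dmu}). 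Your sketch does not supply either ingredient.
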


Several remarks are due before continuing.
\begin{rmk}\label{rmk: cond L2}
{\rm Another form of condition~\eqref{eq: cond L2} is the following: For
$y\in \{2a,2b\}$,
\begin{equation*}
\int_\R |r(x+iy)|^2 dx,  \:\: \int_\R |r^{\prime \prime}(x+iy)|^2 dx < \infty.
\end{equation*}
This implies also that $\int_\R |r^\prime(x+iy)|^2 dx <\infty$. Moreover, since the set
$\{c:\ e^{2\pi\cdot c \lm }p(\lm) \in L^2 (\R) \}$ is convex, it implies the
same condition for all $y\in [2a,2b]$. }
\end{rmk}

\begin{rmk}\label{rmk: Slud}
{\rm
 It is interesting to note that condition~\eqref{eq: cond L2} is precisely the condition that Slud gave in~\cite{Slud}
for linear variance in the case of real (non-analytic) stationary Gaussian processes (with $a=b=0$).
Nonetheless, no direct implication between the results is known and the methods to obtain them are quite remote.

As for the first part of Theorem~\ref{thm: var linear}, we would expect an analogue to hold for real startionary Gaussian
functions; that is, that under mild conditions, the variance of the number
of zeroes in $[-T,T]$ is always at least linear in $T$. To the best of our knowledge, this is yet unknown.
}
\end{rmk}

\begin{rmk}\label{rmk: final L1}
{\rm In case condition~\eqref{eq: cond L2} holds, we shall give an expression for the limit $L_1$
as a convergent series of the form:
\begin{equation*}
\ds \lim_{T\to\infty}\frac{V_f^{a,b}(T)}{2T} = \sum_{k\ge 1} \int_\R
\left(p^{*k}(\lm)\right)^2 w^{a,b}_k(\lm)d\lm.
\end{equation*}
Here $p^{*k}$ denotes the $k$-fold convolution of $p$, and $w^{a,b}_k(\lm)$
is a positive function which can be computed explicitly in terms of $a$, $b$,
$k$ and $r(2ia)$, $r(2ib)$ (and no other reliance on $p$). }
\end{rmk}

The next theorem deals with conditions under which $L_1(a,b)$ is infinite, i.e. the variance is super-linear.
\begin{thm}\label{thm: var super}
Let $f$ be a non-degenerate stationary GAF in a strip $D_\D$.
\begin{enumerate}[label={\rm (\roman{*})}]
\item\label{item: all a,b}
Suppose $J\subset (-\D,\D)$ is a closed interval such that for every $y\in J$, the function
$\lambda \mapsto (1+\lambda^2) e^{2\pi \cdot 2y\lambda} p(\lambda)$
does not belong to $L^2(\R)$.
Then for every $\alpha\in J$ the set $\{\beta\in J:\ L_1(\alpha,\beta)<\infty\}$ is
finite.

\item\label{item: particular a,b}  The limit $L_1(a,b)$ is infinite for
    particular $a, b$ if either $\rho$ does not have density, or, if it has density $p$
       and for any two points $\lm_1, \lm_2\in \R$
   there exists intervals $I_1,I_2$ such that $I_j$ contains $\lm_j$
   ($j=1,2$) and
   \begin{align}\label{eq: cond inf}
(1+\lm) e^{2\pi \cdot 2y\lambda}p(\lambda) \not\in L^2 (\R\setminus(I_1\cup I_2)),
\end{align}
for at least one of the values $y=a$ or $y=b$.
   \end{enumerate}
\end{thm}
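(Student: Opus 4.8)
The plan is to deduce $L_1(a,b)=\infty$ directly from the asymptotic variance formula of Proposition~\ref{prop: var formula}. Via the Poincaré--Lelong identity $n_f=\frac{1}{2\pi}\Delta\log|f|$ together with the Gaussian identity
\[
\cov\bigl(\log|f(z)|,\log|f(w)|\bigr)=\tfrac14\sum_{k\ge 1}\frac{|\widetilde K(z,w)|^{2k}}{k^2},\qquad \widetilde K(z,w)=\frac{K(z,w)}{\sqrt{K(z,z)K(w,w)}},
\]
the variance $\V$ becomes a double integral over $R^{a,b}_T\times R^{a,b}_T$ of the kernel $\Delta_z\Delta_w$ applied to this series. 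Dividing by $T$ and letting $T\to\infty$, the transversal ($x$-)integration localizes to the diagonal $x_1=x_2$ and produces, for each $k$, a \emph{nonnegative} contribution; in the absolutely continuous case $\rho=p\,d\lambda$ these are the quantities $\int_\R (p^{*k})^2\,w^{a,b}_k$ with strictly positive weights $w^{a,b}_k$. Since every term is nonnegative, to obtain $L_1(a,b)=\infty$ it suffices to show that the principal ($k=1$) term diverges; this single reduction drives all parts of the theorem.

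Consider first part~(ii) when $\rho$ has no density. If $\rho$ carries a nonzero discrete component, Theorem~\ref{thm: var quadratic} gives $L_2(a,b)>0$, so $\V/T=(\V/T^2)\,T\to\infty$ and we are done. If $\rho$ is purely singular--continuous, I examine the principal term directly: after the $x$-integration it equals, up to a positive constant,
\[
\int_a^b\!\!\int_a^b\frac{\int_\R |r''(x+i(y_1+y_2))|^2\,dx}{r(2iy_1)\,r(2iy_2)}\,dy_1\,dy_2,
\]
and $r''(\cdot+iY)$ is the Fourier transform of the finite measure $(2\pi\lambda)^2 e^{2\pi Y\lambda}\,d\rho(\lambda)$. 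Multiplication of a singular--continuous measure by the smooth positive weight $(2\pi\lambda)^2e^{2\pi Y\lambda}$ leaves it singular (it is not carried by $\{\lambda=0\}$), so Plancherel's theorem forces $r''(\cdot+iY)\notin L^2(\R)$ and the inner integral is $+\infty$ for every admissible $Y$. Hence the principal term, and with it $L_1(a,b)$, is infinite.

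For the absolutely continuous case in part~(ii), the same computation identifies the principal term as comparable to $\int_\R \lambda^4\,p(\lambda)^2\,G(\lambda)^2\,d\lambda$, where $G(\lambda)=\int_a^b \frac{e^{4\pi y\lambda}}{r(2iy)}\,dy>0$. A Watson--Laplace analysis of $G$ at its dominant endpoints ($y=b$ as $\lambda\to+\infty$, $y=a$ as $\lambda\to-\infty$) gives $\lambda^4 G(\lambda)^2\asymp(1+\lambda^2)e^{8\pi b\lambda}$ near $+\infty$ and $\asymp(1+\lambda^2)e^{8\pi a\lambda}$ near $-\infty$, the endpoint factor $\lambda^{-2}$ reducing the weight $(1+\lambda^2)$ of~\eqref{eq: cond L2} to the $(1+\lambda)$ of~\eqref{eq: cond inf}. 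Thus finiteness of the principal term is equivalent to $(1+\lambda)e^{4\pi y\lambda}p\in L^2(\R)$ for both $y=a$ and $y=b$. The delicate point is that I establish $L_1=\infty$ through a \emph{lower} bound, and such a bound detects only genuine tail growth, not $L^2$-failure concentrated near a bounded set of frequencies; the hypothesis~\eqref{eq: cond inf}, demanding that the $L^2$-failure persist after deleting arbitrary neighborhoods $I_1\cup I_2$ of any two frequencies, is exactly the robustness that lets me truncate $p$ to the complement of two small intervals and still read off divergence from the principal term.

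Part~(i) is then a critical-exponent argument. We may assume $\rho$ is absolutely continuous, since otherwise part~(ii) gives $L_1(\alpha,\beta)=\infty$ for all $\alpha,\beta$ and the exceptional set is empty. Let $c_+$ (resp.\ $c_-$) be the critical exponential rate governing membership of $e^{4\pi c\lambda}p$ in $L^2$ near $+\infty$ (resp.\ $-\infty$); polynomial prefactors do not move these thresholds. The hypothesis that $(1+\lambda^2)e^{4\pi y\lambda}p\notin L^2(\R)$ for every $y\in J$ forces the interval $J$ to miss the good band $(c_-,c_+)$, so $J\subset[c_+,\infty)$ or $J\subset(-\infty,c_-]$. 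In the first case, for fixed $\alpha\in J$ and any $\beta\in J$ with $\max(\alpha,\beta)>c_+$, the $+\infty$-tail of the principal term diverges at rate $\max(\alpha,\beta)$, whence $L_1(\alpha,\beta)=\infty$; the only heights escaping this lie at the critical value $c_+$, and the second case is symmetric. Hence $\{\beta\in J:\ L_1(\alpha,\beta)<\infty\}$ is at most finite. The main obstacle throughout is making the reduction to the principal term rigorous for irregular $\rho$: justifying the interchange of the distributional Laplacians with the $T\to\infty$ limit, controlling the endpoint (Laplace) asymptotics uniformly, and above all setting up the truncated lower bound in part~(ii) so that the two-interval condition~\eqref{eq: cond inf} is precisely what the argument consumes.
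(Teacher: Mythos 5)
Your high-level strategy is the same as the paper's (lower-bound the variance by the $k=1$ term of Proposition~\ref{prop: var formula} and run a Plancherel-type argument on it), but your computation of the principal term is wrong in a way that destroys the substance of the theorem. When the Laplacians (equivalently the $\partial_{y_1}\partial_{y_2}$-derivatives) act on $q=|r(z-\ov w)|^2/\bigl(r(2iy_1)r(2iy_2)\bigr)$, they also hit the $y$-dependent normalization $r(2iy_1)r(2iy_2)$; these terms produce the factors $l^y_1(\lm)=\partial_y\bigl(1/r(2iy)\bigr)+2\pi\lm/r(2iy)$ of Proposition~\ref{prop: var formula}, and the correct principal weight is $h^{a,b}_1(\lm)=\bigl(l^a_1(\lm)e^{2\pi a\lm}-l^b_1(\lm)e^{2\pi b\lm}\bigr)^2$, which by Claim~\ref{clm: two zeroes} has \emph{exactly two real zeroes} $z_1,z_2$, at locations depending on $a$, $b$ and $r$. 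Your weight $\lm^4G(\lm)^2$, obtained by differentiating only the numerator, is strictly positive off $\lm=0$. Hence your claimed equivalence ``principal term finite $\iff (1+\lm)e^{2\pi\cdot 2y\lm}p\in L^2(\R)$ for $y\in\{a,b\}$'' is false: the principal term can be finite even when that $L^2$-condition fails, namely when the failure is concentrated at the zeroes of $h^{a,b}_1$. This also makes your use of hypothesis~\eqref{eq: cond inf} internally inconsistent: with an everywhere-positive weight, deleting neighborhoods of two \emph{arbitrary} points would never be needed (at most one fixed interval around $\lm=0$ could matter), whereas in the paper the two deleted intervals are precisely neighborhoods of $z_1$ and $z_2$ (Claim~\ref{clm: restrict rho}), which is the entire reason the theorem is stated with the two-interval condition.

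The error is fatal for part (i). Your critical-exponent dichotomy treats the hypothesis of (i) as a tail phenomenon, but $(1+\lm^2)e^{2\pi\cdot 2y\lm}p\notin L^2(\R)$ for every $y\in J$ can hold for purely local reasons --- e.g.\ $p$ compactly supported with an interior singularity $|\lm-\lm_0|^{-1/2}$ --- in which case there are no finite critical rates $c_\pm$, the dichotomy ``$J\subset[c_+,\infty)$ or $J\subset(-\infty,c_-]$'' is vacuous, and your argument yields nothing. That local case is exactly why the conclusion of (i) is ``at most finite'' rather than ``empty'': divergence can fail to be forced only when the singular frequency sits at a zero of $h^{\alpha,\beta}_1$, and the paper's proof shows that, for fixed $\alpha$, the set of $\beta\in J$ with $h^{\alpha,\beta}_1(\lm_1)=0$ is finite via real-analyticity of $y\mapsto\partial_y\bigl(e^{2\pi\lm_1 y}/r(2iy)\bigr)$ (an infinite zero set would force $r(2iy)=e^{2\pi\lm_1 y}/(cy+d)$, contradicting the hypothesis of (i) through condition~\eqref{eq: cond L2}). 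No trace of this mechanism --- the only genuinely delicate point of part (i) --- appears in your proposal. Your treatment of atoms via Theorem~\ref{thm: var quadratic} and of the singular-continuous case via Plancherel is sound in spirit, but both must be run on measures restricted away from neighborhoods of $z_1,z_2$, as in Claim~\ref{clm: restrict rho} and Lemma~\ref{lem: mu on balls dmu}, not on the full measure.
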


\begin{rmk}\label{rmk: cond not L2}
{\rm There is a gap between the conditions
given for linear variance (in Theorem~\ref{thm: var linear}) and those for
super-linear variance (in Theorem~\ref{thm: var super}). For instance, the theorems do not decide about all the suitable
pairs $(a,b)$ in case the spectral measure has density $\frac 1 {\sqrt
{|\lm|}} \ind_{[-1,1]}(\lm)$. On the other hand, we are ensured to have
super-linear variance in case $\rho$ has a singular part.
If $\rho$ has density $p\in L^1(\R)$ which
is bounded on any compact set, then $(1+\lm^2)p(\lm)\in L^2(\R)$ implies asymptotically
linear variance, and $(1+\lm)p(\lm)\not\in L^2(\R)$ implies asymptotically super-linear variance.
}
\end{rmk}

\begin{rmk}\label{rmk: arg}
{\rm Minor changes to the developments in this paper may be made in order to
prove analogous results regarding the increment of the argument of a
stationary GAF $f$ along a horizontal line. Namely, let $V^{a,a}(T)$ denote
the variance of the increment of the argument of $f$ along the line
$[0,T]\times \{a\}$ (for some $-\D<a<\D$). Then:
\begin{itemize}
\item  the limit $L_2(a)= \lim_{T\to\infty} \frac{V^{a,a}(T)}{T^2}$
    exists, belongs to $[0,\infty)$, and is positive if and only if the
    spectral measure contains an atom.
\item  the limit $L_1(a)=\lim_{T\to\infty} \frac{V^{a,a}(T)}{T}$ exists,
    belongs to $(0,\infty]$, and is finite if $\rho$ has density $p(\lm)$
    such that $(1+\lm^2)e^{2\pi\cdot 2 a\lm }p(\lm)\in L^2(\R)$.
    Moreover, $L_1(a)$ is infinite if for any $\lm_0\in\R$ there is an
    interval $I$ containing $\lm_0$ such that the measure
    $(1+\lm)e^{2\pi \cdot 2a \lm }d\rho(\lm)$ restricted to
    $\R\setminus I$ is not in $L^2(\R)$.
\end{itemize}
In our recent work with Buckley \cite{BF} we extend these statements to hold for a differentiable
(not necessarily analytic) Gaussian process from $\R$ to $\C$.
Also notice that the first item is essentially proved in this paper (Claim~\ref{clm:
error} below). 
}\end{rmk}

The rest of the paper is organized as follows: Theorem~\ref{thm: var
quadratic} concerning quadratic growth of variance is proved in
Section~\ref{sec: square}, and is mainly a consequence of Theorem~\ref{thm:
LLN}. In Section~\ref{sec: form of var} we develop  an asymptotic formula for $\V /T$ (Proposition~\ref{prop: var
formula} below), which will be used to prove Theorems~\ref{thm: var linear} and~\ref{thm: var super}
in Sections~\ref{sec: linear} and~\ref{sec: super} respectively.
Appendices~\ref{sec: off}, \ref{sec: tec} and \ref{sec: CLT} include proofs of some technical lemmas.

\subsection{Acknowledgements}
I thank Mikhail Sodin for his advice and encouragement during all stages of
this work. I am grateful to Jeremiah Buckley, who contributed most of the arguments 
in Appendix~\ref{sec: tec}. I also thank Boris Tsirelson for his interest and suggestions
which stimulated the research, and Boris Hanin for a useful discussion. I am
grateful to Alon Nishry and Igor Wigman for reading the original draft
carefully and pointing out some errors.

\section{Theorem \ref{thm: var quadratic}: Quadratic Variance}\label{sec: square}

Recall the notation $R_T=R^{a,b}_T = [-T,T)\times[a,b].$ From Theorem~\ref{thm: LLN} we know that
$$\ds \lim_{T\to \infty} \frac {n_f(R_T) }{T} = Z,$$ where $Z$ is some random variable and the limit is in the almost sure sense.
Moreover, $\var Z>0$ if and only if the spectral measure of $f$ contains an
atom. Clearly
$$\var \left(\lim_{T\to\infty} \frac {n_f(R_T)}{T} \right) = \var Z$$

Theorem~\ref{thm: var quadratic} would be proved if we could change the limit
with the variance on the left-hand side. By the dominanting convergence theorem, it is
enough to find an integrable majorant for the tails of
$$X_T=\frac {n_f(R_T)}{T}\text{  and  }X_T^2=\frac {n_f(R_T)^2}{T^2}.$$
To this end we use the following proposition, which provides uniform exponential
bounds on tails of $X_T$:
\begin{prop}\label{prop: off}
Let $f$ be a stationary GAF in some horizontal strip, then using the notation
above we have
$$\exists C,c>0: \: \sup_{T\ge 1}\Pro(X_T>s) < Ce^{-cs} =h(s).$$
\end{prop}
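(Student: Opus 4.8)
The plan is to reduce the uniform-in-$T$ estimate to a single-cell Offord bound via stationarity and H\"older's inequality, obtaining the single-cell bound from Jensen's formula together with the Gaussian moments of $f$.

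First I would record the single-region estimate. Fix the unit cell $Q=[0,1]\times[a,b]$, and choose a centre $z_0$ and radii $0<\rho<R$ with $\overline{D(z_0,R)}\subset D_\D$ and $Q\subset D(z_0,\rho)$; this is possible since $Q$ has bounded size and lies strictly inside the strip. Writing the zeroes of $f$ in $D(z_0,R)$ as $z_0+a_k$, Jensen's formula gives
\[
n_f(Q)\,\log\tfrac{R}{\rho}\le \sum_{|a_k|<R}\log\tfrac{R}{|a_k|}=\frac{1}{2\pi}\int_0^{2\pi}\log\bigl|f(z_0+Re^{i\theta})\bigr|\,d\theta-\log|f(z_0)|=:W+U,
\]
where $U=-\log|f(z_0)|$ and $W$ is the boundary average. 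Setting $c_0=\log(R/\rho)$ and applying Cauchy--Schwarz,
\[
\E\,e^{\theta_0 n_f(Q)}\le\bigl(\E\,e^{(2\theta_0/c_0)W}\bigr)^{1/2}\bigl(\E\,e^{(2\theta_0/c_0)U}\bigr)^{1/2}.
\]
The factor involving $U$ equals $\E|f(z_0)|^{-2\theta_0/c_0}$, which is finite provided $\theta_0<c_0$, since $f(z_0)$ is a complex Gaussian with positive variance $r(2i\,\mathrm{Im}\,z_0)>0$. For the factor involving $W$, convexity of the exponential and Jensen's inequality give $e^{\beta W}\le\frac{1}{2\pi}\int_0^{2\pi}|f(z_0+Re^{i\theta})|^{\beta}\,d\theta$, whose expectation is at most $\sup_\theta\E|f(z_0+Re^{i\theta})|^{\beta}<\infty$, because complex Gaussians have moments of all orders and $r(\cdot)$ is bounded on the compact circle. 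Hence there is $\theta_0>0$ with $M_0:=\E\,e^{\theta_0 n_f(Q)}<\infty$.

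Next I would globalize. Cover $[-T,T]$ by $M=\lceil 2T\rceil$ closed intervals $I_i$ of length $\le 1$ with disjoint interiors, and set $Q_i=I_i\times[a,b]$, so $n_f(R_T)\le\sum_{i=1}^M n_f(Q_i)$. Each $Q_i$ lies in a horizontal translate of $Q$, so by monotonicity of $n_f$ and stationarity $\E\,e^{\theta_0 n_f(Q_i)}\le M_0$ for every $i$ and every $T$. H\"older's inequality with all exponents equal to $M$ then yields
\[
\E\exp\Bigl(\tfrac{\theta_0}{M}\,n_f(R_T)\Bigr)\le\E\prod_{i=1}^M e^{(\theta_0/M)n_f(Q_i)}\le\prod_{i=1}^M\bigl(\E\,e^{\theta_0 n_f(Q_i)}\bigr)^{1/M}\le M_0,
\]
uniformly in $T$; crucially, the dependence between the cells plays no role in this step. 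Since $M\le 3T$ for $T\ge1$ and $n_f\ge0$, this gives $\E\exp(\tfrac{\theta_0}{3}X_T)\le M_0$ for all $T\ge1$, and Markov's inequality converts it into the tail bound $\Pro(X_T>s)\le M_0\,e^{-(\theta_0/3)s}$, i.e.\ the claim with $C=M_0$ and $c=\theta_0/3$.

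The only genuinely delicate point is the single-cell estimate, and within it the constraint $\theta_0<c_0$ forced by the integrability of $|f(z_0)|^{-\theta}$ near the origin; everything else is routine. The passage to all $T$ is then automatic through the H\"older/stationarity step, which is precisely what makes the bound uniform in $T$.
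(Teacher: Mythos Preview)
Your argument is correct: the single-cell exponential moment via Jensen's formula together with the H\"older/stationarity globalization is exactly the standard Offord-type estimate. The paper itself omits the proof, pointing instead to \cite[Proposition~5.1]{Naomi} (for ``symmetric GAFs'') and remarking that only trivial modifications are needed; your write-up supplies precisely that argument, so there is nothing to compare.
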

Since the proof is quite similar to~\cite[Prop. 5.1]{Naomi}, we defer it to Appendix~\ref{sec: off}.
Proposition~\ref{prop: off} implies that
$$ \sup_{T\ge 1}\Pro(X_T^2>s) < Ce^{-c\sqrt s} =h(\sqrt s).$$
Since both $h(s)$ and $h(\sqrt s)$ are integrable on $\R$, we have the
desired majorants. Exchanging limit and variance then yields:
$$\lim_{T\to\infty}\frac {\var\left(n_f(R_T) \right)}{T^2}=\lim_{T\to\infty}\var \left(\frac {n_f(R_T)}{T} \right) = \var \left(\lim_{T\to\infty}\frac {n_f(R_T)}{T} \right)=\var Z ,$$
and the result is proved.

\section{An Asymptotic Formula for the Variance}\label{sec: form of var}
This section is devoted to the derivation of a formula for the variance
$V^{a,b}_f(T)=\var n_f ([-T,T]\times[a,b])$ where $T$ is large. We prove the
following:

\begin{prop}\label{prop: var formula}
Let $f$ be a stationary GAF in $D_\D$ with spectral measure $\rho$. Suppose
$\rho$ has no discrete component. Then for any $-\D<a<b<\D$, and any
$T\in\R$, the series
$$v^{a,b}(T)=\frac 1{4\pi^2}\sum_{k\ge 1} \frac 1{k^2}\int_{\R}\int_\R T\sinc^2 \left(2\pi T(\lm-\tau) \right)
h^{a,b}_k(\lm+\tau) d\rho^{*k}(\lm)d\rho^{*k}(\tau)$$ converges, and
$$\lim_{T\to\infty} \left(\frac {V^{a,b}(T)}{2T}-v^{a,b}(T) \right) =0. $$
Here $\rho^{*k}$ is the $k$-fold convolution of $\rho$, $\sinc(x)=\frac{\sin
x}x$, and
$$h^{a,b}_k(\lm)=\left(l^a_k(\lm)e^{2\pi
a\lm}-l^b_k(\lm)e^{2\pi b\lm}\right)^2,$$ where
$$l^y_k(\lm)=\pone{y}\left(\frac 1
{r^k(2iy)}\right)+\frac {2\pi}{r^k(2iy)}\lm,  \text{  for }
y\in(-\D,\D), k\in\N.$$
\end{prop}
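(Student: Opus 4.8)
The plan is to derive an exact finite-$T$ formula for $V^{a,b}_f(T)$ and then extract its large-$T$ asymptotics. The starting point is the Kac--Rice / Edelman--Kostlan representation of the second moment of the zero-counting measure. Writing the number of zeroes in the rectangle via the argument principle, $n_f(R_T) = \frac{1}{2\pi}\oint_{\partial R_T} d\arg f$, or equivalently through Green's formula as $n_f(R_T) = \frac{1}{4\pi}\int_{R_T}\Delta \log|f|^2$, I would express both $\E\, n_f(R_T)$ and $\E\, n_f(R_T)^2$ in terms of the covariance kernel $K(z,w)=r(z-\overline w)$. The variance is then governed by the covariance of $\log|f(z)|$ and $\log|f(w)|$. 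The crucial tool is the expansion of this log-covariance: for jointly Gaussian $f(z),f(w)$, one has
\begin{equation*}
\cov\bigl(\log|f(z)|,\log|f(w)|\bigr)=\frac14\sum_{k\ge1}\frac{1}{k^2}\,\bigl|\widehat{K}(z,w)\bigr|^{2k},
\end{equation*}
where $\widehat K(z,w)$ is the normalized correlation coefficient $K(z,w)/\sqrt{K(z,z)K(w,w)}$. The factor $1/k^2$ and the sum over $k$ in the proposition are the fingerprint of this expansion, so reproducing it is the organizing principle of the proof.

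\smallskip

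Next I would substitute $\widehat K$ explicitly. Since $f$ is stationary, $K(z,z)=r(z-\overline z)=r(2i\,\mathrm{Im}\,z)$ depends only on the imaginary part, which is why the functions $l^y_k$ and the normalizing factors $r^k(2iy)$ appear. Applying the differential operators coming from Green's formula (the Laplacians in $z$ and $w$ acting on the $2k$-th power of the correlation) produces, for each $k$, a double integral over the boundary heights $a$ and $b$; the mixed second derivatives acting on $|\widehat K|^{2k}$ generate precisely the combination
\begin{equation*}
h^{a,b}_k(\lambda)=\bigl(l^a_k(\lambda)e^{2\pi a\lambda}-l^b_k(\lambda)e^{2\pi b\lambda}\bigr)^2,
\end{equation*}
with $l^y_k$ arising from differentiating $r^{-k}(2iy)$ in the vertical direction and from the linear-in-$\lambda$ term produced by the horizontal derivative. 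To pass from the kernel to the spectral side I would insert $r(t)=\int_\R e^{-2\pi i t\lambda}d\rho(\lambda)$, so that $K(z,w)^{k}$ becomes an integral against $\rho^{*k}$ by the convolution property of Fourier transforms. The horizontal integration over $[-T,T]$ of a product $e^{-2\pi i x(\lambda-\tau)}$ yields the Dirichlet-type kernel whose square, after the correct normalization, is the $T\sinc^2(2\pi T(\lambda-\tau))$ appearing in the statement; the remaining phase $e^{-2\pi i x(\lambda+\tau)}$ cancels against its conjugate, leaving the dependence on $\lambda+\tau$ through $h^{a,b}_k$.

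\smallskip

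I would then address convergence of the series and the $k$-sum/integral interchange, which I expect to be the main obstacle. The termwise integrand involves $\rho^{*k}$, and the exponential-moment condition~\eqref{eq: cond L1} must be used to control the growth of $\int e^{2\pi\cdot 2y\lambda}\,d\rho^{*k}$ uniformly enough that the factor $1/k^2$ secures absolute convergence; the assumption that $\rho$ has no discrete component is what prevents a genuinely quadratic-in-$T$ contribution from surviving (an atom would make $\widehat K$ fail to decay and destroy the $\sinc^2$ localization). The delicate point is that $T\sinc^2(2\pi T u)$ behaves like an approximate identity concentrating near the diagonal $\lambda=\tau$ as $T\to\infty$, but it is not integrable, so I cannot simply pass to the limit inside; instead I would keep $v^{a,b}(T)$ as the finite-$T$ object and show only that $V^{a,b}(T)/(2T)-v^{a,b}(T)\to0$. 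Establishing this last difference requires bounding the error terms from the Kac--Rice second-moment formula (boundary and corner contributions of $R_T$, and the off-diagonal decay of $\widehat K$ away from real $\lambda$), and showing they are $o(T)$; controlling these uniformly in $k$ while summing against $1/k^2$ is where the analytic work concentrates.
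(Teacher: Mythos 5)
Your route is the paper's route --- argument-principle reduction to the two horizontal edges, the Nazarov--Sodin log-covariance expansion (the source of the $1/k^2$ series), spectral substitution turning $|r|^{2k}$ into double integrals against $\rho^{*k}$, and a Fej\'er/Parseval identity producing the $T\sinc^2(2\pi T(\lm-\tau))$ kernel --- so the skeleton matches. But there are two genuine gaps in how you propose to finish. The first concerns the error term and where the no-atoms hypothesis actually works. After discarding the vertical edges, the neglected covariances are bounded via Cauchy--Schwarz by $\sqrt{\var(\triangle^T_1\arg f)}=\sqrt{C^{a,a}(T)}$, so the statement that must be proven is $C^{a,a}(T)=o(T^2)$. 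This does not follow from $\sinc^2$ localization or corner estimates; the paper proves it by noting that absence of atoms makes $f$ ergodic under horizontal shifts (Fomin--Grenander--Maruyama), and the ergodic theorem, upgraded to $L^2$ convergence using the finite second moment of $X^a(T)$, gives $\var(X^a(T))/T^2\to 0$. The no-atoms hypothesis is also needed earlier, to guarantee that a.s.\ $f$ has no zeroes on $\partial R_T$, so that the argument-principle identity holds at all. Your heuristic (``an atom destroys the decay of $\widehat K$'') explains why the hypothesis is natural, but it supplies a proof of neither fact.

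The second gap is that you place ``the analytic work'' in controlling the $k$-sum uniformly, whereas that part is comparatively soft: every term is nonnegative, so monotone convergence handles the sum/integral exchanges once the formula is established termwise. The block you omit entirely is the justification that $C^{a,b}(T)$ equals $\intt\intt \ptwo{a}{b}\cov\left(\log|f(t+ia)|,\,\log|f(s+ib)|\right) dt\,ds$ in the first place, i.e.\ interchanging expectation with the $t,s$-integration and with the two vertical derivatives \emph{before} the expansion is applied. Since $\pone{a}\log|f|$ is only controlled by $|f'/f|$, this requires uniform bounds on $\E\,|f(t+ia)f(s+ib)|^{-p}$ for some $1<p<2$, including near the diagonal $t\approx s$, $a=b$, where the two Gaussians become almost fully correlated and the negative moment nearly diverges; the paper devotes two dedicated lemmas to this (a rearrangement bound when the correlation is bounded away from its maximum, and a finer estimate when $|r(t-s+ia+ib)|^2$ approaches $r(2ia)r(2ib)$), plus a uniform-integrability argument for the difference quotients to move $\ptwo{a}{b}$ inside the expectation. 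Without this step, none of the subsequent formal manipulations of the series under the integrals is licensed.
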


We begin with some definitions and facts that will be needed along the proof.

\subsection{Preliminaries}\label{sec: formula pre}

\subsubsection{Tools from Harmonic Analysis}\label{sec: Fourier}
In this section we discuss some operations on measures and their relation to the Fourier transform.

Denote by $\M$ the space of all finite measures on $\R$, similarly $\Mplus$
denotes all finite non-negative measures on $\R$.
For two measure $\mu, \nu\in \M$ the \emph{convolution} $\mu *\nu\in \M$ is a
measure defined by:
$$\ds \forall \p\in C_0(\R):\: (\mu*\nu)(\p)=\iint \p(\lm+\tau)d\mu(\lm)d\nu(\tau).$$
When both measures have density, this definition agrees with the standard
convolution of functions. We write $\mu^{*k}$ for the iterated convolution of
$\mu$ with itself $k$ times.

for a measure $\mu\in \Mplus$
having exponential moments up to $2\D$ (i.e., obeying condition~\eqref{eq:
cond L1}), and a number $y\in (-2\D,2\D)$, we define the \emph{exponentially
rescaled measure} $\mu_y\in \Mplus$ by
\[\forall \p\in C_0(\R): \: \mu_y(\p)=\mu(e^{2\pi y\lm } \p(\lm)) = \int_\R e^{2\pi y\lm }\p(\lm)d\mu(\lm) \]

\begin{obs*}
  For any $\mu, \nu\in \M$ and any $|y|<2\Delta$,
  $$ (\mu*\nu)_y = \mu_y*\nu_y.$$
\end{obs*}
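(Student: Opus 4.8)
The statement to prove is that for any $\mu,\nu\in\M$ and any $|y|<2\D$, the exponential rescaling commutes with convolution: $(\mu*\nu)_y=\mu_y*\nu_y$. The plan is to verify the identity by testing both sides against an arbitrary test function $\p\in C_0(\R)$ and unwinding the two definitions directly. Since finite measures on $\R$ are determined by their action on $C_0(\R)$, it suffices to show that $(\mu*\nu)_y(\p)=(\mu_y*\nu_y)(\p)$ for every such $\p$.

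First I would expand the left-hand side using the definition of the exponentially rescaled measure, which gives
\[
(\mu*\nu)_y(\p)=(\mu*\nu)\bigl(e^{2\pi y\lm}\p(\lm)\bigr).
\]
Then I would apply the definition of convolution to the function $\psi(\lm)=e^{2\pi y\lm}\p(\lm)$, yielding a double integral
\[
(\mu*\nu)_y(\p)=\iint e^{2\pi y(\lm+\tau)}\p(\lm+\tau)\,d\mu(\lm)\,d\nu(\tau).
\]
The key algebraic step is to split the exponential factor as $e^{2\pi y(\lm+\tau)}=e^{2\pi y\lm}e^{2\pi y\tau}$, distributing one copy into each integration variable. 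This rewrites the integrand in the form required by the convolution definition applied to the rescaled measures $\mu_y$ and $\nu_y$, namely
\[
\iint \p(\lm+\tau)\,e^{2\pi y\lm}d\mu(\lm)\,e^{2\pi y\tau}d\nu(\tau)
=\iint \p(\lm+\tau)\,d\mu_y(\lm)\,d\nu_y(\tau)=(\mu_y*\nu_y)(\p).
\]

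The main technical point to check, rather than any genuine obstacle, is integrability: the rescaled measures $\mu_y,\nu_y$ must in fact be finite (so that the convolution $\mu_y*\nu_y$ is defined in $\M$) and the double integral above must converge absolutely so that Fubini's theorem justifies the manipulation. I would note that the finiteness of $\mu_y$ for $|y|<2\D$ is exactly guaranteed by the exponential moment condition~\eqref{eq: cond L1} assumed for the measures under consideration, and since $\p$ is bounded, the integrand $e^{2\pi y(\lm+\tau)}|\p(\lm+\tau)|$ is dominated by $\|\p\|_\infty\,e^{2\pi y\lm}e^{2\pi y\tau}$, which is integrable against $|\mu|\otimes|\nu|$. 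This legitimizes both the application of the convolution definition and the factorization of the exponential, completing the argument; for signed or complex measures in $\M$ one simply applies the same reasoning to the total variation measures.
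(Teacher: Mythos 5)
Your proof is correct and follows essentially the same route as the paper's: both test the identity against an arbitrary $\p\in C_0(\R)$, unwind the definitions of convolution and exponential rescaling, and factor $e^{2\pi y(\lm+\tau)}=e^{2\pi y\lm}e^{2\pi y\tau}$ into the two integration variables. Your added remarks on finiteness of $\mu_y$ (via condition~\eqref{eq: cond L1}) and the Fubini justification are sound extras that the paper leaves implicit.
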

\begin{proof}
for any test function $\p \in C_0(\R)$ we have:
\begin{align*}
  \int \p \ d(\mu_y*\nu_y) &= \iint \p(\lm+\tau) \ d\mu_y(\lm)d\nu_y(\tau)\\
  &= \iint \p(\lm+\tau) e^{2\pi y(\lm+\tau)}\ d\mu(\lm)d\nu(\tau) = \int\p \ d(\mu*\nu)_y
  \end{align*}
\end{proof}
\begin{cor*}
If $\rho\in \Mplus$ is such that~\eqref{eq: cond L1}, then for any $|y|<2\D$ and $k\in \N$ we have
$ (\rho_y)^{*k} = (\rho^{*k})_y,$
so there will be no ambiguity in the notation $\rho^{*k}_y$.
\end{cor*}

Next, we define for $\mu\in \M$ the \emph{flipped measure} $\flip{\mu}\in\M$
by:
$$\flip{\mu}(I)=\mu(-I) \text{ for any interval } I\subset\R,$$
and the \emph{cross-correlation} of measures $\mu,\nu\in \M$ by:
$$\mu\star \nu := \mu * \flip{\nu}.$$
An alternative definition, via actions on test-functions, would be:
$$\ds \forall \p\in C_0(\R):\: (\mu\star\nu)(\p)=\iint \p(\lm-\tau)d\mu(\lm)d\nu(\tau).$$
Notice that the cross-correlation operator is bi-linear, but not commutative. In all following expressions, convolution precedes cross-correlation.

We are now ready to prove a lemma relating these notions to the Fourier tranform.

\begin{lem}\label{lem: inv exists}
Suppose $\rho\in\Mplus$ obeys~\eqref{eq: cond L1}, and $r=\FF[\rho]$. Then, for any $|y|<2\D$, $x\in\R$ and $k\in\N$:
$$|r(x+iy)|^{2k}=\FF [ \rho^{*k}_y \star  \rho^{*k}_y ](x).$$

This measure acts on a test-function
 $\p\in C_0(\R)$ in the following way:
\begin{align*}
 (\rho^{*k}_y \star  \rho^{*k}_y)(\p)
&= \iint \p(\lm-\tau)e^{2\pi y(\lm+\tau)
}d\rho^{*k}(\lm)d\rho^{*k}(\tau).
\end{align*}
\end{lem}

\begin{proof}
Fix $k\in\N$. Since $r(z)=\FF[\rho](z)$, by a standard property of Fourier transform one has
$r^k(z)=\FF[\rho^{*k}](z)$.
Writing $z=x+iy$, this reads
\begin{equation*}
r^k(x+iy)=\int_\R e^{-2\pi ix \lm }e^{2\pi y \lm} d\rho^{*k}(\lm).
\end{equation*}
This implies:
\begin{itemize}
\item  $r^{k}(x+iy)= \FF[\rho^{*k}_y](x)$
\item
    $\overline{r^k(x+iy)}=\FF[\rho^{*k}_y](-x)=\FF[\flip{\rho^{*k}_y}](x)$,
\end{itemize}
which leads to
\[
|r(x+iy)|^{2k} =  \FF[\rho^{*k}_y * \flip{\rho^{*k}_y}](x) = \FF[\rho^{*k}_y \star \rho^{*k}_y](x).
\]
\end{proof}

Also useful to us will be the following special case of Parseval's identity for measures (see Katznelson ~\cite[VI.2.2]{Kat}):
\begin{lem}\label{lem: Parseval}
For any finite measure $\g$ on $\R$,
\begin{equation*}
\ds \int_{-2T}^{2T}\left(1-\frac{|x|}{2T}\right) \FF[\g](x) dx=
\int_\R 2T \sinc^2 (2\pi T\xi) d\g(\xi).
\end{equation*}
where $\sinc(\xi)=\frac{\sin \xi}{\xi}$ and $\FF[\g]$ is the Fourier
transform of $\g$.
\end{lem}

\subsubsection{Properties of a ``normalized'' covariance function}
Here we summarize some properties of a normalized version of the covariance function, namely
$\frac{|r(x+i a+i b)|^2}{r(2ia)\,r(2ib)},$
which will be used later in our proofs.
 In the following, when we do not specify the variables
we mean the statements holds on all the domain of definition. We use the
subscript notation for partial derivatives (such as $q_a$ for $\pone{a} q$).


\begin{lem}\label{lem: q}
The function
\begin{equation*}
 q(x,a,b) := \frac{|r(x+i a+i b)|^2}{r(2ia)\,r(2ib)},
 \end{equation*}
 is well-defined, infinitely differentiable on $\R\times(-\D,\D)^2$, and
satisfies the following properties:
\begin{enumerate} [label=\textbf{\arabic{*}. }]
  \item $q(x,y_1,y_2)\in [0,1]$. \\
$q(x,y_1,y_2)=1\text{ if and only if } (x=0 \text{ and } y_1=y_2).$

\item $\sup_{x\in\R } q(x,y_1,y_2)<1$
for any $y_1\ne y_2$ in $(-\D, \D)$.
\item For fixed $y_1$ and $y_2$ let $g_{y_1,y_2}(x)$ be one of the
    functions $q$, $q_a$, $q_b$, $q_{ab}$ evaluated on the line
    $\{(x,y_1,y_2): \, x\in \R\}$. Then $g_{y_1,y_2}\in L^\infty(\R)$.
If condition~\eqref{eq: cond L2} holds, then for any $y_1,y_2\in [a,b]$
    we have also $g_{y_1,y_2}\in L^1(\R) \cap C_0(\R)$ (i.e., is integrable and tends to zero as $x\to\pm \infty$).
\item $q_a(0,t,t)=0$, for any $t\in(-\D,\D)$.
\end{enumerate}
\end{lem}

\begin{proof}
Since $r(2iy)>0$ for all $y\in\R$, the function $q$ is indeed well-defined;
differentiability follows from that of $r(z)$.

For item 1, notice that
  $$q(x,a,b)
   = \frac {\left(\int e^{2\pi(a+b) \lm}e^{-2\pi i x\lm}d\rho(\lm) \right)^2}{\int e^{2\pi\cdot 2a \lm}d\rho(\lm)\, \int e^{2\pi\cdot 2a \lm}d\rho(\lm)}$$
  and so, by Cauchy-Schwarz, is in $[0,1]$. Equality $q(x,a,b)=1$ holds only if the
  function $\lm\mapsto e^{2\pi\cdot a \lm} e^{-2\pi i x\lm}$ is a constant times the function
  $\lm\mapsto e^{2\pi\cdot b \lm}$, $\rho$-a.e., but, if $\rho$ is non-atomic, this is impossible unless $x=0$ and
  $a=b$.

Further, we notice that
$$|r(x+ia+ib)|  = \left|\int e^{2\pi(a+b) \lm}e^{-2\pi i x\lm}d\rho(\lm)  \right| \le \int e^{2\pi(a+b)\lm} d\rho(\lm) = r(ia+ib),$$
so that $q(x,a,b) \le q(0,a,b)<1$ (the right-most inequality is by item 1). Taking the supremum yields item 2.

For item 3, notice any one of the functions $q, q_a, q_b, q_{ab}$ is the sum
of summands of the form
\begin{equation}\label{eq: term}
C(a,b)\  r^{(j)}(x+i a+i b)\ r^{(m)}(-x+i a+i b),
\end{equation}
where $0\le j, m\le 2$ are integers. It is enough therefore to explain why
$r^{(j)}(x+ia+ib)$ is bounded and approaches zero as $x\to\pm \infty$, for
any integer $0\le j\le 2$. Recall that
$$r^{(j)}(x+iy)= c_j \FF[\lm^j e^{2\pi y\lm} \,d\rho(\lm)](x), $$
where $c_j$ is some constant. As a function of $x$, this is a Fourier
transform of a non-atomic measure, therefore has the desired properties.

If condition~\eqref{eq: cond L2} holds, then $d\rho(\lm)=p(\lm)d\lm$, and the
function $\lm\mapsto\lm^j e^{2\pi (y_1+y_2)\lm}p(\lm)$ is in $L^2(\R)$. Then,
its Fourier transform $r^{(j)}(x+iy_1+iy_2)$ is also in $L^2(\R)$, and each
summand of the form~\eqref{eq: term} is in $L^1(\R)$, as anticipated.

For item 4, notice that for all $x\in\R$ and all $a,b\in(-\D,\D)$ we have the
symmetry $q(x,a,b)=q(x,b,a)$, and therefore for all $t\in\R$:
$q_a(x,t,t)=q_b(x,t,t)$. On the other hand, for all $t\in (-\D,\D)$ it holds
that $q(0,t,t)=1$, so taking derivative by $t$ we get $q_a(0,t,t)\cdot 1
+q_b(0,t,t)\cdot 1 =0. $ This proves the result.
\end{proof}

We are now ready to begin the proof of Proposition~\ref{prop: var formula}.

\subsection{Integrals on significant edges.}\label{sec: significant edges}
 The boundary of
the rectangle $R_T=[-T,T]\times[a,b]$ is composed of four segments $\partial
R_T=\bigcup_{1\le i\le 4}I_j$ with induced orientation from the
counter-clockwise orientation of $\partial R_T$, where
$I_1=[-T,T]\times\{a\}$ and $I_3=[T,-T]\times\{b\}$. By the argument
principle,
$$n_f(R_T) = \sum_{1\le i\le 4} \frac 1 {2\pi}\triangle^T_i \arg f,$$
where $\triangle^T_i \arg f$ is the increment of the argument of $f$ along
the segment $I_i$ (a.s. $f$ has no zeroes on the boundary of the rectangle
$R_T$ \footnote{ To see this, first notice that the distribution of
$n_f(I_j)$ for $j=2,4$ (the number of zeroes in a ``short'' vertical
segments) does not depend on $T$. If it were not a.s. zero, then $\E
n_f(I_2)>0$. Now for any finite set of points $\{t_j\}_{j=1}^N\subset [0,1]$,
we have $\E n_f([0,1]\times [a,b])\ge \sum_{j=1}^N \E n_f(\{t_j\}\times
[a,b])= N\E n_f(I_2)$, yielding $\E n_f([0,1]\times [a,b])=\infty$ - which is
false. For $j=1,3$, recall that since there are no atoms in the spectral
measure, $f$ is ergodic with respect to horizontal shifts (this is
Fomin-Grenander-Maruyama Theorem, see explanation and references
within~\cite{Naomi}). This implies that each horizontal line (such as $L_a=\R
\times \{a\}$) either a.s. contains a zero or a.s. contains no zeroes. If the
former holds, then also $\E n_f([0,1]\times \{a\})>0$, and the measure
$\nu_f$ from Theorem~\ref{thm: LLN} has an atom at $a$ - contradiction to
part (iii) of that Theorem.}).

Then, by the argument principle,
\begin{equation}\label{eq: four segs}
V^{a,b}_f(T)=\var \left[n_f ( R_T) \right]=\frac 1 {4\pi^2}
\sum_{1\le i,j\le 4} \cov \left(\triangle^T_i \arg f,\ \triangle^T_j
\arg f \right), \end{equation} where
$$\cov(X,Y)=\E[XY]-\E X\cdot \E Y.$$

 Our first claim is that asymptotically when $T$ is large, the
terms involving the (short) vertical segments are negligible in this sum.

\begin{clm}\label{clm: big segments}
As $T\to\infty$, one has:
\begin{align*}
\V &= \frac 1 {4\pi^2} \sum_{i,j\in\{1,3\}} \cov \left(\triangle^T_i
\arg f,\ \triangle^T_j \arg f \right)
+O\left(1+\sqrt{\var (\triangle^T_1 \arg f)} + \sqrt{\var (\triangle^T_3
\arg f)}\right).
\end{align*}
\end{clm}
\begin{proof}
We demonstrate how to bound one of the terms in~\eqref{eq: four segs}
involving a ``short'' vertical segment (corresponding, say, to $i=2$).
By stationarity,
$\var (\triangle^T_2 \arg f)= \var (\triangle^0_2 \arg f)=:c^2$.
Applying the Cauchy-Schwarz inequality, we have:
\begin{align*}
\cov \left( \triangle^T_1 \arg f,\ \triangle^T_2 \arg f \right)
&\le \sqrt{\var (\triangle^T_1 \arg f)}\sqrt{\var (\triangle^T_2 \arg f)}\\
&= c\cdot \sqrt{\var (\triangle^T_1 \arg f)}.
\end{align*}
\end{proof}

Let us now give an alternative formulation of Claim~\ref{clm: big segments}. Using Cauchy-Riemann equations, we have:
\begin{align*}
\triangle^T_1 \arg f &= \intt \left(\frac{\partial}{\partial x}\arg f(x+ia)\right) dx=-\intt \frac{\partial}{\partial a}\log|f(x+ia)|\: dx=:-X^a(T)\\
\triangle^T_3 \arg f &= -\intt \left(\frac{\partial}{\partial x}\arg
f(x+ib)\right) dx=\intt \frac{\partial}{\partial b}\log|f(x+ib)|\:
dx=X^b(T)
\end{align*}

Denoting $C^{a,b}(T)=\cov (X^a(T),X^b(T))$ we may rewrite Claim~\ref{clm: big
segments} as
\begin{equation*}
  \V =\frac 1{4\pi^2}\left(C^{a,a}(T)-2C^{a,b}(T)+C^{b,b}(T)\right) + O\left(1+\sqrt{C^{a,a}(T)}+\sqrt{C^{b,b}(T)}\right),
\end{equation*}
or alternatively:

\begin{customclm}{1a}\label{clm: var in terms of cov}
As $T\to\infty$, we have:
\begin{equation*}
\frac {V^{a,b}(T)}{2T}=\frac
{C^{a,a}(T)-2C^{a,b}(T)+C^{b,b}(T)}{4\pi^2\cdot 2T}+O\left(
\frac{ 1+ \sqrt{C^{a,a}(T)} +\sqrt{C^{b,b}(T)} }{T}\right).
\end{equation*}
where
\begin{equation}\label{eq: C}
\begin{split}
C^{a,b}(T)=&\E \left\{ \intt dt \,\intt ds\, \left(\pone{a}\log |f(t+ia)|\ \pone{b}\log |f(s+ib)| \right) \right\}\\
&\quad -\E\left\{\intt \pone{a} \log|f(t+ia)| \: dt\right\}\E\left\{\intt \pone{b} \log|f(s+ib)| \: ds\right\}\\
\end{split}
\end{equation}
\end{customclm}

\subsection{Changing order of operations}\label{sec: exchanges}
Our goal now is to prove the following:
\begin{clm}\label{clm: Fubini}
\begin{equation}\label{eq: before series}
\ds C^{a,b}(T)= \intt  \intt  \
\ptwo{a}{b} \cov\left(\log|f(t+ia)|,\ \log|f(s+ib)| \right)dt \ ds.
\end{equation}
\end{clm}

The meaning of this formula for $C^{a,a}(T)$ is as follows:
on the RHS, first take the mixed partial derivative (as if $a\ne b$), then substitute $b=a$ and integrate by $t$ and $s$.

The proof is an application of the following two lemmas.
In both, we assume $f$ is a stationary GAF in $D_\D$, and $a,b\in (-\D,\D)$.
\begin{lem}\label{lem: I,II}
For any $T>0$ the following integrals are finite:
\begin{enumerate}[label={\rm (A-\Roman{*})}]
\item\label{item: one dim}
\quad $\displaystyle \intt \E \left|
    \frac{f'(t+ia)}{f(t+ia)}\right| dt <\infty.$
\vspace{5pt}
\item\label{item: two dim}
\quad $\displaystyle\intt \intt \E \left|
    \frac{f'(t+ia)}{f(t+ia)} \frac{f'(s+ib)}{f(s+ib)} \right| dt \ ds
    <\infty.$
\end{enumerate}
\end{lem}

\begin{lem}\label{lem: deriv}
For almost all $t,s\in [-T,T]^2$,
\begin{enumerate}[label={\rm (B-\Roman{*})}]
\item   \label{eq: j1}
\quad $\displaystyle \E \left[ \pone{a} \log |f(t+ia)| \right]= \pone{a} \E \bigg[ \log |f(t+ia)| \bigg]$
\vspace{5pt}
\item\label{eq: j2}
\quad $\displaystyle \E \left[ \ptwo{a}{b}  \log |f(t+ia)| \log |f(s+ib)| \right]  = \ptwo{a}{b} \E  \bigg[ \log |f(t+ia)| \log |f(s+ib)| \bigg]. $
\end{enumerate}
\end{lem}

Though simple to state, Lemmas~\ref{lem: I,II} and \ref{lem: deriv} both require long and non-trivial proofs, which do not follow any standard method we are aware of. We therefore defer their proofs to Appendix~\ref{sec: tec}.

\begin{proof}[Proof of Claim~\ref{clm: Fubini}]
Recall the definition of $C^{a,b}(T)$ in~\eqref{eq: C}, and notice that
\begin{equation}\label{eq: d log}
\left|\pone{a} \log|f(x+ia)|\right| \le \left|\frac {f'(x+ia)}{f(x+ia)}\right|.
\end{equation}
{\it Step 1:} Exchange integrals and expcectation: By~\eqref{eq: d log} and Lemma \ref{lem: I,II}, we may apply Fubini's theorem to get:
\begin{equation*}
\begin{split}
C^{a,b}(T)=& \intt \intt \ \big[ \,  \E \ptwo{a}{b}  \, \{ \log|f(t+ia)|\,
\log|f(s+ib)| \}
\\
&\hspace{3.7cm}-\E \pone{a} \log|f(t+ia)|\, \E \pone{b} \log|f(s+ib)|\ \big] dt \ ds .
\end{split}
\end{equation*}
{\it Step 2: } Exchage the order of expectation and derivative inside the integral by $t$ and $s$.
This is justified directly by Lemma~\ref{lem: deriv}.
We arrive at the desired form.
\end{proof}

\subsection{The error term}\label{sec: error}
Next, we show that the error term in Claim~\ref{clm: var in terms of cov} approaches zero as $T$ tends to infinity.

\begin{clm}\label{clm: error}
If $\rho$ contains no atoms, then for any $a\in (-\D,\D)$:
$$\lim_{T\to\infty} \frac{C^{a,a}(T)}{T^2}=0. $$
\end{clm}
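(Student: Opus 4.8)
The plan is to show that $C^{a,a}(T) = o(T^2)$ by applying the formula developed in Claim~\ref{clm: deriv} with $b=a$, and exploiting the vanishing of the integrand along the diagonal. Setting $b=a$ in Claim~\ref{clm: deriv} gives
\begin{equation*}
\frac{C^{a,a}(T)}{2T} = \frac 1{4\pi^2}\sum_{k\ge 1}\frac 1{k^2}\int_\R\int_\R T\sinc^2\bigl(2\pi T(\lm-\tau)\bigr)\, h^{a,a}_k(\lm+\tau)\, d\rho^{*k}(\lm)\,d\rho^{*k}(\tau),
\end{equation*}
so it suffices to prove that the right-hand side is $o(T)$, equivalently that the double integral against each $\sinc^2$ kernel, summed over $k$ with the $1/k^2$ weights, grows slower than $T$. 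First I would record the elementary bound $T\sinc^2(2\pi T u)\le \min\{T,\, C/(T u^2)\}$, which is the standard Fej\'er-kernel estimate: the kernel is of height $O(T)$, concentrates on a window $|u|\lesssim 1/T$, and decays like $1/(Tu^2)$ away from the diagonal $u=0$. The mass of this kernel is bounded uniformly in $T$ (indeed $\int_\R T\sinc^2(2\pi T u)\,du = \pi/2$), so the kernel behaves like an approximate identity concentrating on the diagonal $\lm=\tau$.

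The key point is the structure of $h^{a,a}_k$ on the diagonal. When $b=a$ we have $h^{a,a}_k(\lm)=\bigl(l^a_k(\lm)e^{2\pi a\lm}-l^a_k(\lm)e^{2\pi a\lm}\bigr)^2 = 0$ identically; more precisely, the factor $h^{a,b}_k(\lm+\tau)$ as a function of the two variables $(a,b)$ vanishes to first order as $b\to a$, so that on the diagonal $\lm=\tau$ the integrand is suppressed. This is the analytic reflection of part 4 of Claim~\ref{clm: q} ($q_a(0,t,t)=0$): the second mixed derivative $S^{a,a}_k$ picks out $q_a q_b$ and $q_{ab}$ terms which, near $x=0$, are controlled because $q$ attains its maximum value $1$ there with vanishing first derivative. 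Concretely, I would return to the Fourier side and argue that the density/measure $l^a_k\,\rho^{*k}_{2a}\star l^a_k\,\rho^{*k}_{2a}$ whose Fourier transform is $S^{a,a}_k$ has no atom at the origin --- precisely because $\rho$, hence $\rho^{*k}$, is non-atomic, so the cross-correlation $\rho^{*k}_{2a}\star\rho^{*k}_{2a}$ is non-atomic away from $0$, and the relevant combination kills the diagonal contribution. Since $\int_{-2T}^{2T}(1-|x|/2T)\FF[\g](x)\,dx \to \g(\{0\})\cdot(\text{const})$ scales only if $\g$ charges the origin, the absence of such an atom forces the Ces\`aro-type average to be $o(T)$.

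The cleanest route, which I would adopt, is to bypass the series entirely and work directly from equation~\eqref{eq: one dim} specialized to $b=a$:
\begin{equation*}
\frac{C^{a,a}(T)}{2T} = \frac 12\int_{-2T}^{2T}\Bigl(1-\frac{|x|}{2T}\Bigr)\,Q_a(x)\,dx,\qquad Q_a(x):=\ptwo{a}{b}\sum_{k\ge 1}\frac{q^k(x,a,b)}{k^2}\Big|_{b=a}.
\end{equation*}
By part 3 of Claim~\ref{clm: q}, each of $q,q_a,q_b,q_{ab}$ evaluated on the diagonal lies in $L^\infty(\R)$ and, crucially, is a bounded function that tends to $0$ at $\pm\infty$ (being built from Fourier transforms of non-atomic measures, via~\eqref{eq: term}); hence $Q_a\in C_0(\R)\cap L^\infty(\R)$. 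Then I would invoke the fact that for any bounded $g\in C_0(\R)$ the Ces\`aro average $\frac1{2T}\int_{-2T}^{2T}(2T-|x|)g(x)\,dx$ converges (to a finite limit determined by the behavior of $g$, with the $T^2$-scale contribution governed by $\int g$), so that $\int_{-2T}^{2T}(1-|x|/2T)Q_a(x)\,dx = o(T)$ would follow once I show the full integral $\int_\R Q_a(x)\,dx$ is finite or, if it need not be, that the weighted average grows sub-linearly; the decay of $Q_a$ at infinity combined with the triangular weight $1-|x|/(2T)$ is exactly what yields the $o(T)$ rate after dividing by the extra factor of $T$ already present on the left.

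\emph{Main obstacle.} The delicate point is integrability: without condition~\eqref{eq: cond L2} the functions $q_a,q_b,q_{ab}$ on the diagonal need not be in $L^1(\R)$, only in $L^\infty\cap C_0$. Thus I cannot simply pull $\int Q_a$ out; I must argue that the triangular (Fej\'er) weighting against a bounded function vanishing at infinity produces a quantity that is $o(T)$ rather than $\Theta(T)$. The heart of the matter is therefore to show that the ``diagonal mass'' of the measure generating $S^{a,a}_k$ is zero --- equivalently that $\rho$ non-atomic forces the limiting coefficient of the $T^2$ term to vanish --- and to promote this pointwise/measure-theoretic statement to the required uniform-in-$k$ summable bound so that the interchange of sum and limit is legitimate. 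I expect this uniform control over $k$, tying the non-atomicity of $\rho^{*k}$ to a quantitative decay of the $\sinc^2$-averaged integrals, to be the step requiring the most care.
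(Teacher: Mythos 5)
Your proposal attempts a Fourier-analytic proof, which is genuinely different from the paper's published argument, but it contains gaps, one of which is fatal as written. First, your opening formula is wrong: Claim~\ref{clm: deriv} (equivalently, Proposition~\ref{prop: var formula}) computes the \emph{combination} $C^{a,a}(T)-2C^{a,b}(T)+C^{b,b}(T)$, not $C^{a,b}(T)$ for each pair separately; substituting $b=a$ there yields only the vacuous identity $0=0$, whereas your displayed equation equates $C^{a,a}(T)/2T$ with an integral against $h^{a,a}_k\equiv 0$, which would force $C^{a,a}(T)=0$. The vanishing of $h^{a,a}_k$ is not a ``suppression of the diagonal''; it simply reflects that this formula carries no information about $C^{a,a}$ alone, whose correct kernel (from~\eqref{eq: int psi} with $b=a$) is $l^a_k(\lm)\,l^a_k(\tau)\,e^{4\pi a(\lm+\tau)}$. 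Your fallback starting point, equation~\eqref{eq: one dim} at $b=a$, is the right one --- but there lies the fatal gap: you assert that $q,q_a,q_b,q_{ab}$ evaluated at $(x,a,a)$ lie in $C_0(\R)$ because they are ``built from Fourier transforms of non-atomic measures''. Part 3 of Claim~\ref{clm: q} gives the $L^1\cap C_0$ conclusion \emph{only under condition~\eqref{eq: cond L2}}, which Claim~\ref{clm: error} does not assume, and the underlying fact is false in general: Riemann--Lebesgue decay requires an $L^1$ density, and a non-atomic singular measure need not be Rajchman. (Erd\H{o}s's Bernoulli convolutions, or continuous measures on Kronecker sets --- compactly supported, hence admissible spectral measures here --- have Fourier transforms that do not vanish at infinity; for Kronecker sets one even has $\limsup_{|x|\to\infty}|\FF[\rho_{2a}](x)|=\rho_{2a}(\R)$, so $q(x,a,a)$ need not stay bounded away from $1$ for large $|x|$.) Non-atomicity yields decay only in the Wiener averaged sense $\frac 1{2T}\int_{-2T}^{2T}q(x,a,a)\,dx\to 0$, which is far weaker than what your argument uses.

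The part of your sketch that is morally right is the Wiener-lemma mechanism: for each fixed $k$, by Claims~\ref{clm: dab} and~\ref{clm: Parseval},
\begin{equation*}
\frac 1{2T}\int_{-2T}^{2T}\Bigl(1-\frac{|x|}{2T}\Bigr)S^{a,a}_k(x)\,dx=\int_\R \sinc^2\bigl(2\pi T\xi\bigr)\,d\g_k(\xi)\longrightarrow \g_k(\{0\})=0,
\end{equation*}
where $\g_k=\bigl(l^a_k\rho^{*k}_{2a}\bigr)\star\bigl(l^a_k\rho^{*k}_{2a}\bigr)$ has no atom at the origin because $\rho^{*k}$ is non-atomic. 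But to conclude you must interchange the sum over $k$ with the limit $T\to\infty$, and, as you yourself flag, you have no uniform-in-$T$, summable-in-$k$ majorant: the natural bound $\sum_k|S^{a,a}_k|/k^2\lesssim |q_aq_b|/(1-q)+\cdots$ degenerates wherever $q$ is close to $1$, i.e.\ near $x=0$ and, for singular $\rho$, possibly along sequences $x\to\pm\infty$. So the proposal is incomplete exactly at its crux. The paper's actual proof avoids all of this by a soft argument of a completely different nature: since $\rho$ has no atoms, $f$ is ergodic with respect to horizontal shifts (Fomin--Grenander--Maruyama); the ergodic theorem gives $X^a(T)/T\to\E X^a(1)$ a.s.\ and in $L^1$; and since $X^a(T)$ has a finite second moment (relation~\ref{item: two dim} established in the proof of Claim~\ref{clm: Fubini}), the convergence also holds in $L^2$, which is precisely $C^{a,a}(T)/T^2=\var\bigl(X^a(T)\bigr)/T^2\to 0$. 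If you wish to complete a proof along your lines, the missing ingredient is a quantitative, $k$-uniform version of the Wiener-lemma step; the ergodic route bypasses the need for it entirely.
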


\begin{proof}
Since $\rho$ has no atoms, $f$ is an ergodic process (this is the classical Fomin-Grenander-Maruyama theorem, see~\cite[Theorem 4]{Naomi} and references therein).
Thus, by the ergodic theorem,
\begin{equation}\label{eq: erg}
\lim_{T\to\infty}\frac 1 T X^a(T) = \E X^a(1)
\end{equation}
converges almost surely and in $L^1$ to a constant.
Recall $X^a(T)$ has finite second moment (this is precisely relation~\ref{item: two dim}).
Therefore, the convergence in~\eqref{eq: erg} is also in the $L^2$ sense (this is an easy adaptation of the proof for $L^1$ convergence, see \cite[Exercise 7.2.1]{Dur}).
We conclude that:
$$\lim_{T\to\infty}\frac 1 {T^2} \var (X^a(T)) = \lim_{T\to\infty}\frac 1 {T^2} C^{a,a}(T) =0.$$
\end{proof}

Claims~\ref{clm: var in terms of cov} , \ref{clm: Fubini} and \ref{clm: error} give together:
\begin{cor*}
\begin{equation}\label{eq: with error}
\frac {V^{a,b}(T)}{2T}=\frac{C^{a,a}(T)-2C^{a,b}(T)+C^{b,b}(T)}{4\pi^2\cdot 2T}+o(1), \quad T\to\infty,
\end{equation}
where $C^{a,b}(T)$ is given by~\eqref{eq: before series}.
\end{cor*}

\subsection{A formula in terms of the covariance function}
Our goal now is to replace~\eqref{eq: before series} by a simpler formula, using the covariance function.
This is done in the next claim.

\begin{clm}\label{clm: for Fourier}
\begin{align}\label{eq: for Fourier}
  \frac{C^{a,b}(T)}{2T}
  & = \frac 1 2  \int_{-2T}^{2T}\left(1-\frac {|x|}{2T}\right)
  \sum_{k\ge 1} \ptwo{a}{b}  \frac {q(x,a,b)^k}{k^2}\ dx,
\end{align}
where
 \begin{equation}\label{eq: q}
 q(x,a,b) := \frac{|r(x+i a+i b)|^2}{r(2ia)\,r(2ib)}.
 \end{equation}
\end{clm}

For the proof, we will need the following Lemma, which is a direct consequence of a lemma by Nazarov and Sodin~\cite[Lemma
2.2]{SN}  (see also \cite[Lemma 3.5.2]{GAF book}).
\begin{lem} \label{lem: cov of logs}
If $\xi$ and $\eta$ are centered complex
Gaussian random variables, then:
\[
\cov(\log|\xi|,\log|\eta|)=\frac 1 4\sum_{k=1}^\infty
\frac{1}{k^2}\left(\frac
{|\E(\xi\overline{\eta})|^2}{\E|\xi|^2\E|\eta|^2}\right)^{k}.
\]
\end{lem}

\begin{proof}[Proof of Claim~\ref{clm: for Fourier}]
Taking $\xi=f(t+ia)$ and $\eta=f(s+ib)$, we have due to
stationarity:
\begin{equation*}
\frac
{|\E(\xi\overline{\eta})|^2}{\E|\xi|^2\E|\eta|^2} = \frac {|\E (f(t+ia)\overline{f(s+ib)} )|^2}
{\E|f(t+ia)|^2\,\E|f(s+ib)|^2}=\frac {|r(t-s+ia+ib)|^2}
{r(2ia)\,r(2ib)}=q(t-s,a,b).
\end{equation*}
Therefore, by Lemma~\ref{lem: cov of logs} equation~\eqref{eq: before series} becomes:
\begin{align*}
C^{a,b}(T)
 &= \frac 1 4 \intt  \intt  \
 \left\{\ptwo{a}{b}  \sum_{k=1}^\infty \frac 1{k^2}q(t-s,a,b)^k \right\} \, dt \, ds \\
& = \frac 1 2  \int_{-2T}^{2T}\left(1-\frac {|x|}{2T}\right)
   \left\{ \ptwo{a}{b}  \sum_{k\ge 1}  \frac {q(x,a,b)^k}{k^2} \right\}  \ dx.
\end{align*}
In the last equality we used that $\intt \intt Q(t-s) dt\, ds = 2\int_{-2T}^{2T}(2T-|x|) Q(x) dx$
for any $Q\in L^1([-2T,2T])$), which can be proved by a simple change of variables.

All that remains in order to get~\eqref{eq: for Fourier}, is to prove that
\begin{equation}\label{eq: d sum}
\forall x\ne 0, \quad \ptwo{a}{b} \sum_{k\ge 1}  \frac {q(x,a,b)^k}{k^2} =
\sum_{k\ge 1}   \ptwo{a}{b} \frac {q(x,a,b)^k}{k^2}
\end{equation}

Fix $x\ne 0$. For shortness, we do not write the variables
$(x,a,b)$, and use again the subscript notation for partial derivatives. We compute:

\begin{equation*}
  S^{a,b}_k(x):=\ptwo{a}{b} \left\{q^k\right\} =
  \begin{cases}
    q_{ab}, & k=1\\
    k(k-1)q^{k-2}q_a q_b, + kq^{k-1}q_{ab} & k>1.
  \end{cases}
\end{equation*}
Therefore,
\begin{equation*}
\left|\frac {S^{a,b}_k(x)}{k^2}\right| \le  q^{k-2} |q_a q_b|
+ \frac 1 k q^{k-1} |q_{ab}|.
\end{equation*}

By part 1 of Lemma~\ref{lem: q}, $q(x,a,b) < 1$ (notice this holds also if $a=b$).
We deduce that $\sum \left|\frac{S_k^{a,b}}{k^2} \right| < \infty$.
i.e., the RHS of~\eqref{eq: d sum} converges in absolute value.
By standard arguments, this is enough to prove equality~\eqref{eq: d sum}.
Claim~\ref{clm: for Fourier} follows.
\end{proof}

\subsection{A formula in terms of the spectral measure}\label{sec: deriv}
In this section, we finally prove Proposition~\ref{prop: var formula}, by carefully moving to the spectral representation of the formula we had at hand.

\begin{proof}[Proof of Proposition~\ref{prop: var formula}]
Using Lemma~\ref{lem: inv exists} and the definition of $q$ in~\eqref{eq: q}, we get
\[ q(x, a,b)^k= \frac{\FF[\rho_{a+b}^{*k}\star \rho_{a+b}^{*k}](x)} { r^k(2ia)\, r^k(2ib)}
\]

Define
\begin{equation}\label{eq: Sk1}
S^{a,b}_k(x):=\ptwo{a}{b} \left\{q(x, a,b)^k\right\}
=\ptwo{a}{b} \left\{   \frac{\FF[\rho_{a+b}^{*k}\star \rho_{a+b}^{*k}](x)}{ r^k(2ia)\, r^k(2ib)} \right\}.
\end{equation}

\begin{obs*}
\[
S_k^{a,b}(x)= \FF\left[l^a_k(\lm) \rho_{a+b}^{*k}\star l^b_k(\lm)\rho_{a+b}^{*k}  \right](x).
\]
where $l^a_k(\lm), l^b_k(\lm)$ are linear functions in $\lm$, given by
$$l^a_k(\lm)=\pone{a}\left(\frac 1
{r^k(2ia)}\right)+\frac {2\pi}{r^k(2ia)}\lm =
\frac{2}{r^k(2ia)}\left(-ik \frac{r'(2ia)}{r(2ia)}+\pi \lm\right).$$
\end{obs*}

\begin{proof}
Recall that
\begin{align*}
\FF[\rho_{a+b}^{*k}\star \rho_{a+b}^{*k}](x) &= \iint e^{-ix (\lm -\tau)} e^{2\pi i (a+b) (\lm +\tau)}d\rho^{*k}(\lm) d\rho^{*k} (\tau),
\end{align*}
and notice we may differentiate by $a$ and $b$ under the integral, as the result would be continuous and integrable w.r.t. $\rho^{*k}$. From here,
the proof is a straightforward computation.
\end{proof}

Using this observation, we rewrite equation~\eqref{eq: for Fourier} as follows:
\begin{equation}\label{eq: for end}
\frac {C^{a,b}(T)}{2T}=
 \int_{-2T}^{2T} \left(1-\frac {|x|}{2T}\right)
\sum_{k\ge 1} \frac 1 {2k^2}
\ptwo{a}{b} \left\{ \frac { \FF[\rho_{a+b}^{*k}\star \rho_{a+b}^{*k}](x)}{r^k(2ia) r^k(2ib)}  \right\} \ dx.
\end{equation}

Futher, using Lemma~\ref{lem: Parseval} (Parseval's identity), we have for fixed $k$,
\begin{align}\label{eq: int psi}
 \int_{-2T}^{2T}& \left(1-\frac{|x|}{2T}\right) S_k^{a,b}(x)\ dx  = \int_{-2T}^{2T} \left(1-\frac{|x|}{2T}\right)
 \FF\left[l^a_k(\lm) \rho_{a+b}^{*k}\star l^b_k(\lm)\rho_{a+b}^{*k}  \right](x) \notag \\
&= \int_{\R}\int_\R
2T\sinc^2(2\pi T(\lm-\tau)) \ l_k^a(\lm) l_k^b(\tau) e^{2\pi (a+b)(\lm+\tau)} d\rho^{*k}(\lm)d\rho^{*k}(\tau).
\end{align}

Plugging~\eqref{eq: for end} into~\eqref{eq: with error}, we get:
\begin{align}\label{eq: sum out}
\frac {\V}{2T}=
&\frac 1 {4\pi^2} \int_{-2T}^{2T} \left(1-\frac {|x|}{2T}\right) \sum_{k\ge 1} \frac 1{2k^2}(S_k^{a,a}(x) - 2S_k^{a,b}(x) + S_k^{b,b}(x)) dx +o(1)\notag\\
&=\frac 1 {4\pi^2} \sum_{k\ge 1} \frac 1{2k^2}\int_{-2T}^{2T} \left(1-\frac {|x|}{2T}\right) \ (S_k^{a,a}(x) - 2S_k^{a,b}(x) + S_k^{b,b}(x)) dx +o(1)\\
& =  \frac 1{8\pi^2} \sum_{k\ge 1}\frac 1 {k^2} \int_{\R}\int_\R T\sinc^2(2\pi T(\lm-\tau))
h^{a,b}_k(\lm+\tau) d\rho^{*k}(\lm)d\rho^{*k}(\tau)+o(1), \notag
\end{align}
where
\begin{equation}\label{eq: h}
h^{a,b}_k(\lm)=\left(l^a_k(\lm)e^{2\pi a\lm}-l^b_k(\lm)e^{2\pi
b\lm}\right)^2.
\end{equation}

The exchange of sum and integral in the second equality of~\eqref{eq: sum out} is justified by the monotone convergence theorem, as each term in the series is non-negative.
The last equality follows from~\eqref{eq: int psi}.
Equation~\ref{eq: sum out} establishes Proposition~\ref{prop: var formula}.
\end{proof}

\section{Theorem \ref{thm: var linear}: Linear and Intermediate Variance}\label{sec: linear}
We dedicate Section~\ref{sec: pre} to prove some facts which will be needed along the proof. Afterwards, we prove the
existence of the limit $L_1$ and its positivity in Section~\ref{sec: lower bound}. In Section~\ref{sec: lin finite} we prove $L_1$ is finite under
condition~\eqref{eq: cond L2}.

\subsection{Preparation}\label{sec: pre}

\subsubsection{Tools from Analysis}
First we present two observations about convolutions and integration.

\begin{obs}\label{obs: lim triangle}
If $Q:\R \to [0,\infty)$ is integrable on $\R$, then
$$\lim_{T\to\infty}\int_{-T}^T\left(1-\frac{|x|}{T}\right)Q(x) dx =\int_\R Q.$$
\end{obs}

\begin{proof}
Notice that:
  $$\int_{-\sqrt T}^{\sqrt T}\left(1-\frac 1 {\sqrt T} \right)Q(x) dx
  \le  \int_{-T}^T\left(1-\frac{|x|}{T}\right)Q(x) dx \le \int_{-T}^T Q(x)dx, $$
and both ends of the inequality approach the limit $\int_\R Q$.
\end{proof}

\begin{obs}\label{obs: conv}
For any $\psi_1, \psi_2\in C_0(\R)$ and $\mu\in \Mplus$, $$\int \psi_1 \,
d(\mu*\psi_2) = \int \ (\psi_1*\flip{\psi_2}) \, d\mu. $$
\end{obs}

\begin{proof}
\begin{align*}
\int \psi_1 &\, d(\mu*\psi_2)
= \int \psi_1(x+y) \, d\mu(x) \psi_2(y) dy \\
&= \int \left(\int \psi_1(x+y) \,\flip{\psi_2}(-y) \, dy\right)d\mu(x)
 = \int (\psi_1*\flip{\psi_2})(x) d\mu(x).
\end{align*}
\end{proof}

The following lemma shall play a key-role later on in our proof.
\begin{lem}\label{lem: mu on balls dmu}
Let $\mu\in \Mplus$ ($\mu\not\equiv 0$). Then the following limit exists
(finite or infinite):
$$\lim_{\ep\to 0+} \int_\R \frac 1 {2\ep} \mu\left(\tau-\ep,\tau+\ep\right)
d\mu(\tau)= 
\int_\R |\FF[\mu]|^2(x) dx.$$
\end{lem}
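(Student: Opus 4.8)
The plan is to recognize the left-hand side as a symmetric averaging of a single \emph{autocorrelation measure} and to identify its Fourier transform. Writing out the integral,
$$\int_\R \frac1{2\ep}\mu(\tau-\ep,\tau+\ep)\,d\mu(\tau)=\frac1{2\ep}\iint \ind\{|\lm-\tau|<\ep\}\,d\mu(\lm)\,d\mu(\tau)=\frac{\sigma((-\ep,\ep))}{2\ep},$$
where $\sigma:=\mu\star\mu\in\Mplus$ is the cross-correlation of $\mu$ with itself (as defined in Section~\ref{sec: Fourier}). By the computation behind Claim~\ref{clm: inv exists} (which uses only that $\mu$ is a finite measure), its Fourier transform is $\FF[\sigma]=\FF[\mu]\,\overline{\FF[\mu]}=|\FF[\mu]|^2=:g\ge 0$. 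Thus everything reduces to showing that the symmetric derivative $D(\ep):=\sigma((-\ep,\ep))/2\ep$ of $\sigma$ at the origin tends to $\int_\R g$. The crucial structural fact I will exploit is the non-negativity of $g$, which is what forces the limit to exist and to equal $\int_\R g$, be it finite or infinite. I would split according to whether $g\in L^1(\R)$.

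If $g\in L^1(\R)$, then by Fourier inversion $\sigma$ is absolutely continuous with continuous density $G(u)=\int_\R g(\xi)e^{2\pi i u\xi}\,d\xi$. Consequently $D(\ep)=\frac1{2\ep}\int_{-\ep}^\ep G(u)\,du\to G(0)=\int_\R g$ as $\ep\to0+$, by continuity of $G$ at the origin. Equivalently, the multiplication formula gives $D(\ep)=\int_\R \sinc(2\pi\ep\xi)\,g(\xi)\,d\xi$, and dominated convergence with majorant $g$ yields the same limit.

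If $\int_\R g=\infty$, I must show $D(\ep)\to\infty$, and here the box kernel is awkward because the corresponding multiplication formula involves the non-integrable, sign-changing $\sinc$. I would instead test $\sigma$ against the \emph{non-negative} triangle kernel $\Lambda_\delta(u)=\tfrac1\delta\,(1-|u|/\delta)_+$, whose Fourier transform is $\FF[\Lambda_\delta](\xi)=\sinc^2(\pi\delta\xi)\ge 0$. Since $\sinc^2(\pi\delta\,\cdot)\in L^1(\R)$, the multiplication formula applies cleanly and gives
$$\int_\R \Lambda_\delta\,d\sigma=\int_\R \sinc^2(\pi\delta\xi)\,g(\xi)\,d\xi.$$
As $\delta\to0+$ the integrand tends to $g$ pointwise, so Fatou's lemma forces $\int_\R\Lambda_\delta\,d\sigma\to\int_\R g=\infty$. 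Finally the pointwise bound $\Lambda_\delta\le\frac1\delta\ind_{(-\delta,\delta)}$ gives $\int_\R\Lambda_\delta\,d\sigma\le 2D(\delta)$, whence $D(\delta)\to\infty=\int_\R g$, completing this case.

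The main obstacle is precisely the case $g\notin L^1$: one cannot naively invoke a Parseval/multiplication identity against the indicator $\ind_{(-\ep,\ep)}$, since its transform is neither integrable nor of one sign. Replacing the box by the non-negative triangle kernel---for which the multiplication formula is legitimate, Fatou applies, and which dominates a constant multiple of the box from below---is the step that circumvents this difficulty. The sign condition $g=|\FF[\mu]|^2\ge 0$ is used essentially in both the Fourier-inversion step and the Fatou step.
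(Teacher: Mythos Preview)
Your proof is correct and follows essentially the same route as the paper: both hinge on comparing the box kernel $\tfrac{1}{2\ep}\ind_{(-\ep,\ep)}$ with the triangle kernel (equivalently $\varphi_\ep*\varphi_\ep$), whose Fourier transform $\sinc^2$ is non-negative and integrable, so that the multiplication formula applies cleanly and Fatou (respectively, restriction to compacta) yields the lower bound. The paper organizes this as a single upper/lower sandwich rather than your case split on whether $g=|\FF[\mu]|^2\in L^1$, but the substance---and in particular the key triangle-kernel step---is the same.
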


\begin{proof} 
Denote $\p_\ep = \frac 1{2\ep}\ind_{(-\ep,\ep)}$ for $\ep>0$.
 Rewriting the integral and using Parseval's identity, we get:
\begin{align*}
I_\mu(\ep)&:= \frac 1 {2\ep} \int_\R  \mu\left(\tau-\ep,\tau+\ep\right) d\mu(\tau)\\
& = \int_\R (\mu * \p_\ep)(\tau) d\mu(\tau)\\
& = \int_\R (\FF[\mu]\cdot \FF[\p_\ep])(x) \FF[\mu](-x) dx\\
&= \int_\R \sinc(2\pi\ep x) |\FF[\mu]|^2(x) dx
\end{align*}
Since $|\sinc(2\pi \ep x)|\le 1$, we have the upper bound $I_\mu(\ep)\le
\int_\R |\FF[\mu]|^2(x) dx.$

Using the Observation~\ref{obs: conv} and the fact that $\p_\ep * \p_\ep \le 2\p_{2\ep}$
we get:
\begin{align*}
\int_\R (\mu* \p_\ep)\ d(\mu* \p_\ep)  &= \int_\R  \mu *(\p_\ep*\p_\ep) d\mu
 \le 2 \int_\R \mu* \p_{2\ep} \ d\mu =2 I_\mu(2\ep)
\end{align*}

On the other hand,
\begin{align*}
\int_\R (\mu* \p_\ep)\ d(\mu* \p_\ep) &= \int |\FF[\mu*\p_\ep]|^2 = \int  |\FF[\mu]|^2\cdot \sinc^2(2\pi\ep x)\  dx\\
&\ge \int_K |\FF[\mu]|^2\cdot \sinc^2(2\pi\ep x) \ dx
\end{align*}
for any compact set $K\subset\R$. Since the limit $\ds \lim_{\ep\to
0+}\sinc(2\pi\ep x)=1$ is uniform in $x\in K$, the last expression approaches
$\int_K |\FF[\mu]|^2$ as $\ep\to 0+$. Thus, by choosing $K$ and then $\ep>0$
properly, the lower bound may be made arbitrarily close to $\int_\R
|\FF[\mu]|^2$. This concludes the proof.
\end{proof}

\subsubsection{Lower bounds on $h^{a,b}_1$}
The main goal of this subsection is to give lower bounds on $h^{a,b}_1$ (defined in~\eqref{eq: h}).
We begin with a simple claim.
\begin{clm}\label{clm: two zeroes}
The function $h^{a,b}_1$ has exactly two real zeroes.
\end{clm}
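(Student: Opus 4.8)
The plan is to analyze the function $h^{a,b}_1(\lm)$ explicitly. Recall from~\eqref{eq: h} that
$$h^{a,b}_1(\lm)=\left(l^a_1(\lm)e^{2\pi a\lm}-l^b_1(\lm)e^{2\pi b\lm}\right)^2,$$
where each $l^y_1(\lm)$ is an affine (degree-one) function of $\lm$ with a nonzero leading coefficient $\frac{2\pi}{r(2iy)}$. Since $h^{a,b}_1$ is a perfect square, its real zeroes are exactly the real zeroes of the inner function
$$g(\lm):=l^a_1(\lm)e^{2\pi a\lm}-l^b_1(\lm)e^{2\pi b\lm}.$$
So the claim reduces to showing that $g$ has exactly two real zeroes.

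First I would treat the case $a\ne b$. Factoring out $e^{2\pi a\lm}$ (which never vanishes), the zeroes of $g$ coincide with the zeroes of
$$G(\lm):=l^a_1(\lm)-l^b_1(\lm)e^{2\pi(b-a)\lm}.$$
The idea is a counting argument using the structure ``linear minus (linear times exponential).'' I would compute $G'(\lm)$ and observe that, since the exponential factor dominates, $G'$ itself is of the form (affine)$\cdot e^{2\pi(b-a)\lm}$ plus a constant, so $G'$ has at most one real zero (the equation (affine)$\,e^{c\lm}=\text{const}$ has at most one solution once one checks monotonicity of the affine-times-exponential piece, or at most two, which I would then refine). By Rolle's theorem, if $G'$ has at most one zero then $G$ has at most two zeroes. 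To get \emph{exactly} two, I would check the sign behavior of $g$ at $\lm\to\pm\infty$: the term with the larger of $a,b$ in the exponent dominates at $+\infty$ and the term with the smaller dominates at $-\infty$, and I would verify that $g$ changes sign, forcing at least the right number of real roots. A cleaner route may be to use part~4 and item~1 of Claim~\ref{clm: q}, or the known value $l^y_1(\lm)$ at special points, to locate the zeroes concretely (e.g.\ one expects $\lm=0$ to play a distinguished role via the relation $q_a(0,t,t)=0$).

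The main obstacle I anticipate is pinning down the exact count rather than just an upper or lower bound: the naive Rolle/derivative estimate gives ``at most two'' but controlling the number of sign changes of $G'$ (equivalently, that the transcendental equation (affine)$\,e^{c\lm}=$(affine) has exactly the expected number of real solutions) requires care, since an affine function times an exponential is not monotone. I would handle this by studying the single critical point of the affine-times-exponential term and comparing values there against the competing linear term, which reduces everything to a finite sign check. The degenerate case $a=b$ must be treated separately: there $g(\lm)=\bigl(l^a_1(\lm)-l^a_1(\lm)\bigr)e^{2\pi a\lm}$ would vanish identically, so the correct reading is that $h^{a,a}_1$ arises as the $b\to a$ limit (via the mixed-derivative convention noted after Claim~\ref{clm: Fubini}), and I would verify that this limiting $h^{a,a}_1$ is again a square of an affine-times-exponential expression with exactly two real zeroes, consistent with the $a\ne b$ analysis.
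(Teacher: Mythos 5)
Your reduction to counting the real zeroes of $g(\lm)=l^a_1(\lm)e^{2\pi a\lm}-l^b_1(\lm)e^{2\pi b\lm}$, and your Rolle-type upper bound, can indeed be made rigorous: after factoring out $e^{2\pi a\lm}$, one gets $G'(\lm)=\tfrac{2\pi}{r(2ia)}-A(\lm)e^{2\pi(b-a)\lm}$ with $A$ affine of positive leading coefficient, and since an affine function times a growing exponential increases from $0^-$ through a single negative minimum up to $+\infty$, it equals the positive constant $\tfrac{2\pi}{r(2ia)}$ exactly once; hence $G$ has at most two zeroes. The genuine gap is in your route to \emph{at least} two. You propose to ``verify that $g$ changes sign'' between $-\infty$ and $+\infty$, but it does not: writing $l^y_1(\lm)=\tfrac{2\pi}{r(2iy)}(\lm-\psi(y))$ with $\psi(y)=\tfrac1{2\pi}\tfrac{d}{dy}\log r(2iy)$, one has $g(\lm)\sim-\tfrac{2\pi}{r(2ib)}\,\lm\, e^{2\pi b\lm}<0$ as $\lm\to+\infty$, and $g(\lm)\sim\tfrac{2\pi}{r(2ia)}\,\lm\, e^{2\pi a\lm}<0$ as $\lm\to-\infty$; both tails are negative. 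Since $G\to-\infty$ at both ends and has a single critical point (a maximum), the zero count is a priori $0$, $1$, or $2$, and no amount of asymptotic sign-checking will decide it: you must exhibit an interior point where $g>0$, and nothing in your outline produces one.

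The missing input is the strict convexity of $y\mapsto\log r(2iy)$ (equivalently, non-degeneracy of $\rho$), which gives $\psi(a)<\psi(b)$. Then for every $\lm$ in the nonempty interval $(\psi(a),\psi(b))$ both summands $\tfrac{2\pi}{r(2ia)}(\lm-\psi(a))e^{2\pi a\lm}$ and $-\tfrac{2\pi}{r(2ib)}(\lm-\psi(b))e^{2\pi b\lm}$ are strictly positive, so $g>0$ there; combined with your ``at most two'' bound this closes the count at exactly two. This convexity fact is precisely the pivot of the paper's (shorter) argument: dividing the zero equation by $l^b_1(\lm)$ turns it into $e^{2\pi(b-a)\lm}=C\,\tfrac{\lm-\psi(a)}{\lm-\psi(b)}$ with $C>0$, i.e.\ an increasing exponential against a M\"{o}bius function that is strictly decreasing on each side of its pole at $\psi(b)$, with the same finite positive limit $C$ at $\pm\infty$; this forces exactly one crossing on each side of the pole. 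Two smaller points: the distinguished points are $\psi(a)$ and $\psi(b)$, not $\lm=0$ (the relation $q_a(0,t,t)=0$ plays no role here); and the case $a=b$ never arises, since the claim is only invoked for the fixed pair $a<b$ of the theorem, so no limiting convention for $h^{a,a}_1$ is needed.
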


\begin{proof}
By the form of $h^{a,b}_1$, $h^{a,b}_1(\lm)=0$ if and only if
$$e^{2\pi (b-a)\lm} = \frac{l^a_1(\lm)}{l^b_1(\lm)} = \frac{ \frac{1}{r(2ia)}\left(\pi \lm -i \frac{r'}{r}(2ia) \right)  }
{\frac{1}{r(2ib)}\left(\pi \lm -i \frac{r'}{r}(2ib)\right)} = C\cdot
\frac{\lm-\psi(a)}{\lm-\psi(b)}, $$ where $C>0$ is a positive
constant and $\psi(y)= \frac 1 {2\pi} \frac{d}{dy}\left[\log r(2iy) \right]$.
Since $y\mapsto\log r(2iy)$ is a convex function, for $a<b$ we have
$\psi(a)<\psi(b)$. Therefore, $\lm\mapsto C\ \frac{\lm-\psi(a)}{\lm-\psi(b)}$
is a strictly decreasing function, with a pole at $\psi(b)$ and with the same
positive limit at $\pm\infty$. Thus, it crosses exactly twice the increasing
exponential function $e^{2\pi (b-a)\lm}$.
\end{proof}

The next claim will enable us to bound $h^{a,b}_1$ from below, on most of the
real line. Denote by $z_1,z_2\in\R$ ($z_1<z_2$) the two real zeroes of
$h^{a,b}_1$ whose existence is guaranteed by Claim~\ref{clm: two zeroes}.  We
also use the notation $B(x,\delta)$ for the interval of radius $\delta>0$
around $x\in \R$.

\begin{clm}\label{clm: h1 bound}
For all $\delta>0$, there exists $c_\delta>0$ such that for all $\lm\in \R
\setminus \left(B(z_1,\delta)\cup B(z_2,\delta) \right)$:
\begin{equation*}
h^{a,b}_1(\lm)>c_\delta (1+\lm^2) \max (e^{2a \cdot 2\pi \lm}, e^{2b \cdot
2\pi\lm}).
\end{equation*}
\end{clm}

\begin{proof}
Since the function $\frac{h^{a,b}_1(\lm)}{(1+\lm^2)e^{2a\cdot 2\pi\lm}} =
\left( \frac {l^a_1(\lm) - \l^b_1(\lm)e^{2\pi(b-a)\lm }} {\sqrt{1+\lm^2}}
\right)^2 $ approaches strictly positive limits as $|\lm|\to \infty$, there
exist $M_a, c_a>0$ such that
$$\forall |\lm|\ge M_a: h^{a,b}_1(\lm) \ge c_a (1+\lm^2) e^{2a\cdot 2\pi\lm}. $$
Similarly, there exist some $M_b, c_b>0$ such that $\forall |\lm|\ge M_b:
h^{a,b}_1(\lm) \ge c_b(1+\lm^2)e^{2b\cdot 2\pi\lm}.$ Take $M=\max(M_a,M_b)$.
Since $h(\lm)$ attains a positive minimum on $[-M,M]\setminus
\left(B(z_1,\delta)\cup B(z_2,\delta)\right)$, there exists some $c>0$
such that for all $\lm$ in this set, $h(\lm)\ge c (1+\lm^2)\max(e^{2a \cdot
2\pi \lm},e^{2b \cdot 2\pi \lm} ).$ Choosing $c_\delta = \min(c,
c_a,c_b)$ will yield the result.
\end{proof}

The next claim is a slight modification of the previous one, in order to fit
our specific need. Denote
$\diag=\indep.$

\begin{clm}\label{clm: restrict rho}
For every $\delta>0$ there exist a set $F=F_\delta =\R \setminus (I_1\cup
I_2)$ such that $I_j$ is an interval containing $z_j$ and of length at most
$\delta$ ($j=1,2$), $\rho(F)>0$, and there exists $c_\delta>0$ such that for
all small enough $\ep$,
\begin{equation*}
h(\lm+\tau)\ge c_\delta (1+(\lm+\tau)^2)\max \left(e^{2a \cdot 2\pi (\lm+\tau)},
e^{2b \cdot 2\pi(\lm+\tau)}\right),
\end{equation*}
for all $\lm,\tau\in (F\times F)\cap \diag.$
\end{clm}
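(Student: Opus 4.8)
The plan is to obtain the desired pointwise inequality directly from Claim~\ref{clm: h1 bound}, which already bounds $h^{a,b}_1(\mu)$ from below as soon as $\mu$ stays outside fixed neighborhoods of the two zeroes $z_1<z_2$. Since here $h^{a,b}_1$ is evaluated at the \emph{sum} $\mu=\lm+\tau$, the entire task is to arrange that $\lm+\tau\notin B(z_1,\delta_0)\cup B(z_2,\delta_0)$ for every $(\lm,\tau)\in(F\times F)\cap\diag$; granting this, Claim~\ref{clm: h1 bound} applied at $\mu=\lm+\tau$ gives exactly the asserted bound with $c_\delta:=c_{\delta_0}$. So I would first fix the radius $\delta_0$ coming from Claim~\ref{clm: h1 bound}, then translate the condition on $\lm+\tau$ into a condition on the individual variables using the diagonal constraint, and remove the resulting intervals.

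The geometric core is the doubling caused by the diagonal. On $\diag$ we have $|\lm-\tau|<\ep$, so $\lm+\tau$ differs from $2\lm$ by at most $\ep$; thus I would record the identity $\lm+\tau-z_j=2(\lm-z_j/2)+(\tau-\lm)$, which shows that keeping $\lm+\tau$ away from $z_j$ is the same as keeping $\lm$ (and symmetrically $\tau$) away from the half-point $z_j/2$. This factor of two --- each zero $z_j$ of $h^{a,b}_1$ dictating an excised interval centered at the half-point $z_j/2$ --- is the one delicate feature to keep track of. Concretely, given $\delta>0$ I would pick $\delta_0\in(0,\delta/2]$, set $I_j=B(z_j/2,\delta_0)$ (so $|I_j|=2\delta_0\le\delta$, one interval attached to each zero $z_j$), require $\ep<\delta_0$, and take $F=\R\setminus(I_1\cup I_2)$. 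Then for $\lm,\tau\in F$ with $|\lm-\tau|<\ep$ the identity above together with $|\lm-z_j/2|\ge\delta_0$ gives $|\lm+\tau-z_j|>2\delta_0-\ep>\delta_0$, so indeed $\lm+\tau\notin B(z_1,\delta_0)\cup B(z_2,\delta_0)$.

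It remains to secure $\rho(F)>0$, and here the non-atomicity of $\rho$ does the work: since $f$ is non-degenerate we have $\rho(\R)>0$, while $\rho\bigl(B(z_j/2,\delta_0)\bigr)\to\rho(\{z_j/2\})=0$ as $\delta_0\downarrow0$. Hence I would shrink $\delta_0$ (still $\le\delta/2$) until $\rho(I_1)+\rho(I_2)<\rho(\R)$, and only afterwards freeze $c_\delta=c_{\delta_0}$ and the intervals $I_j$, and restrict to $\ep<\delta_0$. The main obstacle is exactly this bookkeeping --- choosing $\delta_0$ small enough for $\rho(F)>0$, fixing the lower-bound constant $c_{\delta_0}$ and the intervals $I_j$ independently of $\ep$, and only then restricting to small $\ep$ --- so that a single $c_\delta$ serves uniformly for all small $\ep$; disjointness of $I_1,I_2$ is automatic once $\delta_0<\tfrac12|z_2/2-z_1/2|$, which is positive because $z_1<z_2$.
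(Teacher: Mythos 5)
Your proof is correct and takes essentially the same route as the paper's: excise the intervals $B(z_j/2,\delta_0)$ around the half-points of the zeroes, use the triangle inequality $|\lm+\tau-z_j|\ge 2\delta_0-\ep\ge\delta_0$ on the $\ep$-diagonal, and invoke Claim~\ref{clm: h1 bound} at $\mu=\lm+\tau$. The extra bookkeeping you supply --- using non-atomicity of $\rho$ to shrink $\delta_0$ until $\rho(F)>0$, and taking $\delta_0\le\delta/2$ so that $|I_j|\le\delta$ --- merely makes explicit what the paper leaves implicit.
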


\begin{proof}
Choose $F=\R \setminus \left(B\left(\frac {z_1}{2}, \delta_0\right)\cup
B\left(\frac {z_2}{2}, \delta_0\right) \right),$
 where
$\delta_0\le \delta$ is small enough so that $\rho(F)>0$. Then, for $\ep \le
\delta_0$ and $(\lm, \tau)\in (F\times F) \cap \diag$, we have
$$|\lm+\tau-z_j|\ge |2\tau-z_j|-|\lm -\tau| \ge 2\delta_0-\ep \ge \delta_0.$$
Choosing the constant $c_\delta>0$ which is the consequence of applying
Claim~\ref{clm: h1 bound} will end our proof.
\end{proof}

\subsubsection{Convergence properties of the functions $S_k$}
Recall the definition of $S^{a,b}_k$ in~\eqref{eq: Sk1}.
We stress, once again, that $S^{a,a}_k(x)$ denotes the evaluation of the same mixed partial
derivative at the point $(x,a,a)$.
Our goal in this subsection is to prove the following Lemma.

\begin{lem}\label{lem: sum Sk}
If condition \eqref{eq: cond L2} is satisfied, then for every $k\in\N$ the
functions
  $S^{a,a}_k(x)$, $S^{a,b}_k(x)$ and $S^{b,b}_k(x)$ are in $L^1(\R)$
  with respect to the variable $x$. Moreover,
  $$\sum_{k\ge 1} \frac 1 {k^2} \int_\R S_k(x)dx \text{ converges},$$
with any of the three possible superscripts on the letter $S$.
\end{lem}

We will need some convergence properties of the function $q$ and its partial derivatives.
The first is simple.

\begin{obs}\label{obs: step 1}
 Let $g$ be one of the functions $q, q_a, q_b$ or $q_{ab}$. Then
$g(x,a,a)$, $g(x,a,b)$ and $g(x,b,b)$ are all in $\left(L^1\cap
L^\infty\right)(\R)$ with respect to the variable $x$.
\end{obs}
\begin{proof}
Follows from part 3 of Lemma~\ref{lem: q}.
\end{proof}

The next two claims require some more effort.

\begin{clm}\label{clm: step 2}
 The sum $\sum_{m\ge 1} \int_\R q^m q_a q_b \ dx$ converges.
\end{clm}
\begin{proof}
  We will show, in fact, that the positive series $\sum_{m\ge 1} \int_\R q^m |q_a q_b| \ dx$ converges.

  First, in case we are evaluating at $(x,a,b)$ ($a<b$), our series converges due to~\eqref{eq: int qab, qa qb} and the bound
in part 2 of Lemma~\ref{lem: q}.
  Now assume we are evaluating at $(x,t,t)$ (where $t\in\{a,b\}$). As we deal with a positive series, it is enough to show that both
  \begin{enumerate}[label=(\Roman{*})]
  \item\label{item: |x|<1} \quad $\sum_{m\ge 1} \int_{-1}^1 q^m |q_a q_b| \ dx
      < \infty$, and
  \item\label{item: |x|>1} \quad $\sum_{m\ge 1} \int_{|x|\ge 1} q^m |q_a q_b| \
      dx< \infty$.
  \end{enumerate}
  Denote by $C=\sup_{x\in\R} |q_a q_b(x,t,t)| \in (0,\infty)$. The sum in~\ref{item: |x|>1} is bounded by
  $$C \sum_{m \ge 1} \int_{|x|\ge 1} q^m(x,t,t) \ dx =C \int_{|x|\ge 1 } \frac {q}{1-q}(x,t,t)\ dx \le C^\prime \int_{\R} q(x,t,t) \ dx,$$
where $C^\prime\in (0,\infty)$ is another constant. $C$, $C'$ and $\int_{\R}
q(x,t,t) dx$ are all finite by part 3 of Lemma~\ref{lem: q}.

We turn to show~\ref{item: |x|<1}. By parts 1 and 4 of Lemma~\ref{lem: q},
the sum
\[
\sum_{m\ge 1}q^m |q_a q_b| \ dx = \frac{|q_a q_b|}{1-q}
\]
is well-defined for all $x$ (including $x=0$). By the monotone convergence
theorem, item~\ref{item: |x|<1} is then equivalent to
$$\int_{-1}^1 \frac{|q_a q_b|}{1-q} (x,t,t) \ dx < \infty, $$
which is indeed finite as an integral of a continuous function on $[-1,1]$.
\end{proof}

\begin{clm}\label{clm: step 3}
 The sum $\sum_{m\ge 1}\frac 1 {m+2} \int_\R q^m\ dx$ converges.
\end{clm}

\begin{proof}
We use a fact which appears in standard proofs of the Central Limit Theorem (CLT). For completeness, we include a proof in
appendix~\ref{sec: CLT}.

\begin{lem}\label{lem: CLT}
Let $g\in L^1(\R)$ be a probability density, i.e., $g\ge 0$ and $\int_\R
g=1$. Suppose further that
\begin{enumerate}[label={\rm(\alph{*})}]
\item\label{item: moment3}\quad $\int_{\R }|\lm|^k g(\lm) d\lm <\infty$ for
    $k=1,2,3$ and
\item\label{item: G integ} \quad  $\int_\R |\FF[g](x)|^\nu \ dx <\infty$ for
    some $\nu\ge 1$.
\end{enumerate}
Then there exists $C>0$ such that for all $m\ge \nu$,
$$\int_\R |\FF[g](x)|^{m} dx < \frac C {\sqrt m}. $$
\end{lem}

We would like to apply Lemma~\ref{lem: CLT} to
$$g^{a,b}(\lm)=\frac {e^{2\pi (a+b) \lm}p(\lm)}{r(ia+ib) } . $$
Notice that this is the density of a probability measure.
This choice also obeys
the extra integrability conditions in the lemma (Condition \ref{item: moment3} follows from the exponential moment assumption~\eqref{eq: cond
L1}, and condition~\ref{item: G integ} with $\nu=2$ follows from the $L^2$ assumption~\eqref{eq: cond L2}). We see now that
$$q(x,a,b)= \frac {r(ia+ib)^2}{r(2ia)r(2ib)} \cdot|\FF[g^{a,b}](x)|^2 \le  |\FF[g^{a,b}](x)|^2,  $$
the last inequality following from the log-convexity of $y\mapsto r(iy)$.
Similarly we define $g^{a,a}$ and have $q(x,a,a)= |\FF[g^{a,a}](x)|^2$. Thus
in all three cases of evaluation, using the lemma with the appropriate
function $g$ yields:
\begin{align*}
\sum_{m\ge 1}\frac 1 {m+2} \int_\R q^m\ dx
&\le \sum_{m\ge 1}\frac {1}{m+2}\int_\R |\FF[g](x)|^{2m} dx\\
&< C \sum_{m\ge 1}\frac 1{(m+2) \sqrt{2m}} < \infty,
\end{align*}
as required.
\end{proof}

\begin{proof}[Proof of Lemma \ref{lem: sum Sk}]

Taking derivative by the chain rule, we see that:
\begin{equation}\label{eq: Sk}
  S_k=\ptwo{a}{b} \left\{q^k\right\} =
  \begin{cases}
    q_{ab}, & k=1\\
    k(k-1)q^{k-2}q_a q_b, + kq^{k-1}q_{ab} & k>1.
  \end{cases}
\end{equation}
Now, the fact that $S^{a,a}_k$, $S^{a,b}_k$ and $S^{b,b}_k$ are in $L^1(\R)$ with respect to
$x$ follows from Observation~\ref{obs: step 1}.

We turn now to prove the ``moreover'' part of the claim.
We use~\eqref{eq: Sk} in order to rewrite the desired series:
\begin{align}\label{eq: sum int Sk}
&\sum_{k\ge 1}\frac 1{k^2} \int_\R S_k(x) dx  \notag \\
 &\quad =\int_\R q_{ab} \ dx +\sum_{k\ge 2} \int_\R q^{k-2} q_a q_b
\ dx + \sum_{k\ge 2}\frac 1 k \int_\R q^{k-2}\left(q q_{ab}-q_a q_b
\right)\ dx.
\end{align}
Once again, all functions are evaluated at $(x,a,a)$, $(x,a,b)$ or $(x,b,b)$
and what follows holds for each of the three options. By Observation~\ref{obs: step 1},
\begin{equation}\label{eq: int qab, qa qb}
\int_\R |q| \ dx < \infty,\quad \int_\R |q_{ab}| \ dx < \infty, \quad \int_\R |q_{a} q_b| \
dx<\infty,
\end{equation}
and in particular the left-most term in~\eqref{eq: sum int Sk} is finite.
For the middle sum in~\eqref{eq: sum int Sk}, convergence follows from Claim~\ref{clm: step 2} and~\eqref{eq: int qab, qa qb}.
Convergence of the right-most sum in~\eqref{eq: sum int Sk} follows from Claim~\ref{clm: step 3} and~\eqref{eq: int qab, qa qb}.
This ends the proof of Lemma~\ref{lem: sum Sk}.
\end{proof}


\subsection{Existence and Positivity.}\label{sec: lower bound}
In this section we prove that $L_1$ exists and belongs to $(0,\infty]$.
If $\rho$ has at least one atom, Theorem~\ref{thm: var quadratic} implies that
$\lim_{T\to\infty}\frac{\V }{T^2}>0$, and therefore $L_1=\lim_{T\to\infty}\frac {\V}{T} = \infty$. We thus assume that $\rho$ has no atoms.

Using the formula for the variance obtained in Proposition~\ref{prop: var
formula}, and recalling the functions $h^{a,b}_k$ are non-negative, we see
that the limit $L_1$ exists and is in $[0,\infty]$. More effort is needed in
order to establish that $L_1>0$. We begin with a simple bound arising from
Proposition~\ref{prop: var formula}:
\begin{align}\label{eq: for positivity}
\ds \liminf_{T\to\infty}\frac{V_f^{a,b}(T)}{2T} &=\frac
1{4\pi^2}\liminf_{T\to\infty} \sum_{k\ge 1 } \frac 1{ k^2}
\int_{\R}\int_\R T\sinc^2(2\pi T(\lm-\tau)) h^{a,b}_k(\lm+\tau)
d\rho^{*k}(\lm)d\rho^{*k}(\tau) \notag\\
&\ge \frac 1 {4\pi^2} \liminf_{T\to\infty }\int_\R\int_\R T\
\sinc^2(2\pi T(\lm-\tau)) \ h^{a,b}_1(\lm+\tau)\  d\rho(\lm)d\rho(\tau)\notag\\
&\ge C_0\ \liminf_{\ep\to 0+} \int_\R \int_\R \frac 1 {2\ep}\indep
h^{a,b}_1(\lm+\tau)\ d\rho(\lm)d\rho(\tau),
\end{align}
where $C_0>0$ is an absolute constant.
the first inequality follows from considering only the first term in the original non-negative series.
The second inequality follows by noticing that
$ T\sinc^2(2\pi T\, \lm) \ge \frac {4T}{\pi^2} \ind\{\lm: |\lm|<\frac 1 {4T}\}$.

Fix a parameter $\delta>0$, and fix $F=F_\delta$ to be the set provided by
Claim~\ref{clm: restrict rho}. Continuing~\eqref{eq: for
positivity}, we have
\begin{align}\label{eq: low}
\ds \liminf_{T\to\infty}\frac{V_f^{a,b}(T)}{2T} &\ge c_\delta
\liminf_{\ep \to 0} \iint_{F\times F}
\frac 1{2\ep}\ind_{\diag}(\lm,\tau)\ e^{2\pi \cdot 2a(\lm+\tau)} d\rho(\lm)d\rho(\tau) \nonumber\\
& = c_\delta \liminf_{\ep \to 0} \int_{F} \frac 1 {2\ep}
\rho_{2a}\left((\tau-\ep,\tau+\ep)\cap
F\right) d\rho_{2a}(\tau) \nonumber \\
&= c_\delta  \liminf_{\ep \to 0}\int_\R \frac 1 {2\ep}
\mu\left(\tau-\ep,\tau+\ep\right) d\mu(\tau),
\end{align}
where $\mu$ is the restriction of $\rho_{2a}$ to $F$, i.e. $\mu(\p)=
\rho_{2a}(\ind_F\cdot \p)$ for any test-function $\p$. Notice that by the
choice of $F$, $\mu(\R)=\rho_{2a}(F)>0$.
By Lemma~\ref{lem: mu on balls dmu}, the RHS of~\eqref{eq: low} is strictly positive, and we conclude that $L_1>0$.


\subsection{Linear Variance}\label{sec: lin finite}

Consider again the first line of \eqref{eq: sum out}.
Recall that, as we saw in Section~\ref{sec: deriv}, each term of the series in the
RHS of~\eqref{eq: sum out} is non-negative. Therefore, by the monotone convergence theorem:
\begin{align*}
\lim_{T\to\infty} \frac{\V}{2T} =
 \frac 1 {8\pi^2} \sum_{k\ge 1}\frac 1{k^2}\lim_{T\to\infty}
  \int_{-2T}^{2T} \left(1-\frac {|x|}{2T}\right)\
(S^{a,a}_k(x)-2S^{a,b}_k(x)+S^{b,b}_k(x)) \ dx,
\end{align*}
provided that the limit of each term on the RHS exists. These limits can be computed using Observation \ref{obs: lim triangle}:
\begin{align*}
  \lim_{T\to\infty} \frac{\V}{2T} =
  \frac 1 {8\pi^2} \sum_{k\ge 1}\frac 1{k^2}
  \int_\R
(S^{a,a}_k(x)-2S^{a,b}_k(x)+S^{b,b}_k(x)) \ dx,
\end{align*}
which is finite by Lemma~\ref{lem: sum Sk}.

Lastly, we explain how to obtain the form of $L_1$ appearing in Remark~\ref{rmk: final L1}.
By monotone convergence theorem, we may take term-by-term limit as
$T\to\infty$ in Proposition~\ref{prop: var formula}, and get:
\begin{equation*}
\ds \lim_{T\to\infty}\frac{V_f^{a,b}(T)}{2T} =\frac 1 {8\pi^3}
\sum_{k\ge1} \frac {1}{k^2}\int_\R \left(p^{*k}(\lm)\right)^2
h^{a,b}_k(2\lm)d\lm \in (0,\infty).
\end{equation*}

\section{Theorem~\ref{thm: var super}: Super-linear variance}\label{sec: super}
In this section we prove the two items of Theorem~\ref{thm: var super}, in
reverse order.

\subsection{Item \ref{item: particular a,b}: Super-linear variance for particular $a,b$}
Assume condition~\eqref{eq: cond inf} holds for the particular $a$ and $b$ at
hand. Fix a parameter $\delta>0$, and let $F=F_\delta$ be the set provided by
Claim~\ref{clm: restrict rho}. The premise ensures that, if $\delta$ is small
enough, at least one of the measures $(1+\lm)\rho_{2a}|_{F_\delta}$ and
$(1+\lm)\rho_{2b}|_{F_\delta}$ does not have $L^2$-density. WLOG assume it is
the former. At first, assume also $\rho_{2a}|_F$ is not in $L^2$. Repeating
the arguments of the Subsection~\ref{sec: lower bound} we get the lower bound
$$ \liminf_{T\to\infty}\frac{V_f^{a,b}(T)}{2T} \ge c_\delta \int_\R |\FF[\mu]|^2(x) dx,$$
where $\mu=\rho_{2a}|_F$ and $c_\delta>0$. The LHS is therefore infinite, and
so $L_1=\infty$.

We are left with the case that $\lm \rho_{2a}|_{F_\delta}$ does not have
$L^2$-density, but $\rho_{2a}|_{F_\delta}$ does (denote it by $p_{2a}$). The
argument is similar. Continuing from \eqref{eq: for positivity} and employing
Claim~\ref{clm: restrict rho}, we get
\begin{align*}
\liminf_{T\to\infty} \frac{V_f^{a,b}(T)}{2T}
& \ge c_\delta \liminf_{\ep \to 0} \int_F \int_F \frac 1 {2\ep}
\ind_{(\tau-\ep,\tau+\ep )}(\lm) (\lm+\tau)^2 p_{2a}(\lm) \ p_{2a}(\tau) d\lm d\tau\\
&\ge  c_\delta\cdot 4\int_K \lm^2 p_{2a}(\lm)^2 d\lm,
\end{align*}
where $K\subset F$ is compact. But, by our assumption, by choosing $K$
properly the last bound can be made arbitrarily large, so that
$\lim_{T\to\infty}\frac{V_f^{a,b}(T)}{2T}=\infty$.

\subsection{Item \ref{item: all a,b}: Super-linear variance for almost all $a,b$}
Let $\rho$ be such that the condition in item \ref{item: all a,b} holds. If
$\rho$ has a singular component, then the condition in item \ref{item:
particular a,b} holds for all possible $a,b$ and so $L_1(a,b)=\infty$ with no
exceptions. Otherwise, $\rho$ has density $p(\lm)$. Define the set
$$E = \{(a,b):\ a,b\in J, \ a<b, \ \text{the condition in item~\ref{item: particular a,b} fails for } a, b\}. $$
If $E=\emptyset$, once again $L_1(a,b)=\infty$ for all $a,b\in J$ with no
exceptions.

Assume then there is some $(a_0,b_0)\in E$. This means there exists
$\lm_1,\lm_2$ such that for any pair of intervals $I_1,I_2$ such that
$\lm_j\in I_j$ ($j=1,2$), both the functions $(1+\lm^2)e^{2\pi \cdot 2a_0
\lm}p(\lm)$ and $(1+\lm^2)e^{2\pi\cdot 2b_0\lm}p(\lm)$ are in $L^2(\R
\setminus (I_1\cup I_2))$, but at least one of them (WLOG, the former) is not
in $L^2(\R)$. Observe that the existence of such $\lm_1,\lm_2$ depends solely
on $p(\lm)$, and may therefore be regarded as independent of the point
$(a_0,b_0)\in E$. Moreover, at least one among $\lm_1$ and $\lm_2$ (say,
$\lm_1$) is such that for any neighborhood $I$ containing it, $p\not\in
L^2(I)$.

Suppose now $a,b\in E$ are such that
\begin{equation}
\label{eq: hab(lm)>0} h^{a,b}_1(\lm_1)>0,
\end{equation}
where $h^{a,b}_1(\lm)=\left(l^a_1(\lm)e^{2\pi a\lm}-l^b_1(\lm)e^{2\pi
b\lm}\right)^2$ is the function appearing in the the first term of our
asymptotic formula, and in the lower bound in inequality~\eqref{eq: for
positivity}. Recall $h^{a,b}_1$ is non-negative and has only two zeroes by
Claim~\ref{clm: two zeroes}.

We may choose $\delta>0$ smaller than the minimal distance between $\lm_1$
and a zero of $h^{a,b}_1$, and then construct $F=F_\delta$ as in
Claim~\ref{clm: restrict rho}. Certainly $\lm_1\in F_\delta$, and so the
measure $\mu = \rho_{2a}|_{F_\delta}$ is not in $L^2(\R)$ (it is even not in
$L^2(I)$ for any neighborhood $I$ of $\lm_1$). Just as in
subsection~\ref{sec: lower bound} we shall get
$$ \liminf_{T\to\infty} \frac{\V}{2T} \ge c_\delta \int_\R |\FF[\mu]|^2(x) dx=\infty. $$

We end by showing that for a given point $\lm_1\in\R$ and a given $a\in J$,
the set of $b \in J$ which do not obey~\eqref{eq: hab(lm)>0} is finite.
Indeed, this is the set
$$\{b\in J: \ h^{a,b}(\lm_1)=0\}=\{b\in J:\ \p(a)=\p(b)\}$$
where
$$\p(y)= e^{2\pi
y\lm_1 }l^y_1(\lm_1)=\pone{y}\left(\frac{e^{2\pi \lm_1 y}}{r(2iy)}\right).$$
Suppose the desired set is not finite. Since $\p$ is real-analytic, it must
be constant on $J$. But then $r(2iy)=\frac{e^{2\pi \lm_1 y}}{cy+d}$ for some
$c,d\in\R$, and the corresponding spectral density would satisfy
condition~\eqref{eq: cond L2} for all relevant $a$, $b$. This contradiction
ends the proof.


\appendix
\section{Uniform exponential decay of tails}\label{sec: off}
In this appendix we prove Proposition~\ref{prop: off}. We follow closely~\cite[Proposition 5.1]{Naomi} and \cite[Ch. 7]{GAF book} which prove similar concentration bounds, known as ``Offord-type estimates''.
We rely on the following lemma, which follows either from \cite[Lemma 6.1]{Naomi} or \cite[Lemma 7.1.2]{GAF book}.
\begin{lem}\label{lem log bound}
If $\eta \sim \mathcal N_{\C} (0,\sigma^2)$, and $E$ is an event in the
probability space with $\Pro(E)=p$, then:
$$ |\E (\chi_E\log|\eta|)|  \leq p \left[\frac p {2}-2 \log p+\log \sigma \right].$$
\end{lem}

\begin{proof}[Proof of Proposition~\ref{prop: off}]
Take $\phi(z) =
\phi_T(z)$ to be a real $C^2$ function, whose support is \\ $[-\frac 1 2-T,T+\frac 1
2]\times[a^\prime,b^\prime]$ with $-\Delta<a^\prime<a<b<b^\prime<\Delta$, and
which takes the value $1$ on $R_T=[-T,T]\times[a,b)$. We may build such
$\phi_T(z)$ that will obey also the bound $\norm{\Delta\phi}_{L^1} <
10(T+b-a)$. Fix $s>0$. We are interested in dominating the probability of the event $A_T
=\{n_f(R_T) > sT \} $. Write $p = p_T = \Pro(A_T)$.

We have
\[
n_f(R_T) < \frac 1 {2\pi}\int \Delta\phi_T(z) \log|f(z)| dm(z)\,,
\]

and therefore,
\begin{align*}
sT \cdot p &\leq \E (\chi_{A_T} n_f(R_T)) \leq \E \left( \chi_{A_T}
\frac 1 {2\pi}\int \Delta\phi(z) \log|f(z)| dm(z)  \right) \\
&= \frac 1 {2\pi} \int \Delta\phi \: \E \left(\chi_{A_T} \log|f(z)| \right) dm(z) \\
&\leq \frac {1}{2\pi} \norm{\Delta \phi}_{L^1} \sup_{z\in\R\times[a^\prime,b^\prime]} \E
\left(\chi_{A_T} \log|f(z)| \right)
\end{align*}

The exchange of expectation and integral is justified by Fubini's
theorem, as follows:
\begin{align*}
&\int_D \E \big|\Delta\phi(z) \cdot \log |f(z)|\,\big| dm(z)  = \int_D |\Delta\phi(z) |\, \E\big|\log |f(z)| \,\big|
<\sup \E\big| \log|f(z)| \big| \cdot \norm{\Delta\phi}_{L^1}<\infty.
\end{align*}

Applying lemma \ref{lem log bound} with $\eta =f(z)$, we get:
$$ \sup_{z\in \R\times[a^\prime,b^\prime]} \E (\chi_{A_T} \log|f(z)|) < p(\alpha -2 \log p).$$
for some constant $\alpha>0$ (depending on $\sup_{y\in[a',b']} \E |f(iy)|^2$). Putting all this
together, we get:
$$sT\cdot p \leq \frac 5 \pi (T+b-a) p (c_1 - 2 \log p ) ,$$
which leads to the exponential bound we strived for:
$$\exists c,C>0 \:\text{such that } p_T = \Pro(n_f(R_T) > T\,s) \leq C e^{-c s }, \: \forall T\geq 1\,. $$
\end{proof}

\section{Justification for changing operations}\label{sec: tec}

In this section we prove Lemmas~\ref{lem: I,II} and \ref{lem: deriv}.
\subsection{Proof of Lemma~\ref{lem: I,II}}
We begin by showing~\ref{item: one dim}.
Let $1<p<2$ be arbitrary, and let $q>2$ be such that $\frac 1 p+\frac 1 q=1$. H\"{o}lder's inequality implies:
\begin{align*}
\int_{-T}^T \E\left| \frac{f'(t+ia)}{f(t+ia)}\right| &\le \int_{-T}^T \E [|f'(t+ia)|^q]^{1/q} \E[|f(t+ia)|^{-p}]^{1/p} dt\\
& \le \E [|f'(ia)|^q]^{1/q} \E[|f(ia)|^{-p}]^{1/p}  \cdot T  < \infty,
\end{align*}
where finiteness follows from $f'(ia)$ and $f(ia)$ being complex Gaussian random variables, thus having finite moments of any order.

We now turn to prove~\ref{item: two dim}.
We use the notation $f\lesssim g$ to stand for the inequality $f\le C\cdot g$, where $C>0$ is a constant (which may vary from line to line).
Similarly, $f\eqsim g$ stands for $f=C\cdot g$ with some $C>0$.

 As before, let $1<p<2$ and take $q>2$ to obey $\frac 1 p +\frac 1 q =1$ . By H\"{o}lder inequality we have
\begin{align} \label{eq: Holder}
&\intt \intt \E  \bigg| \frac{f'(t+ia) \ f'(s+ib)} {f(t+ia) \ f(s+ib)}\bigg|  dt ds \notag \\
&\le \intt \intt \left[ \E   \bigg| f'(t+ia) f'(s+ib) \bigg|^{q} \right]^{1/q} \  \left[\E  \bigg| f(t+ia) f(s+ib) \bigg|^{-p} \right]^{1/p} dt \ ds \notag \\
& \lesssim \intt \intt \E \left[  \bigg| f(t+ia) f(s+ib) \bigg|^{-p} \right]^{1/p} dt\  ds.
\end{align}
The last inequality is an application of Cauchy-Schwarz inequality and stationarity, as follows:
\begin{align*}
 &\left[ \E  \bigg| f'(t+ia) f'(s+ib) \bigg|^{q} \right] ^{\frac 1 q}
 \\
 &\le \left( \sqrt{\E |f'(t+ia)|^{2q} \ \E |f'(s+ib)|^{2q}} \right)^{\frac 1 q}
 = \left(\E [ |f'(ia)|^{2q} ] \E [ |f'(ib)|^{2q} ] \right) ^{\frac 1 {2q}} < \infty,\notag
\end{align*}
where again finiteness follows from $f'(ia)$ and $f'(ib)$ being Gaussian.

Let
\[
A = \left\{(t,s)\in [-T,T]^2: \ | r(t-s+ia+ib)|^2 \le \frac 2 3 r(2ia) r(2ib). \right\}.
\]
We split the last integral in~\eqref{eq: Holder} into two parts: on $A$ and on $A^c = [-T,T]^2 \setminus A$.
For the integral on $A$ we use the following lemma:

\begin{lem}\label{lem: for small r}
Suppose $\xi_1$, $\xi_2$ are independent $\calN_\C(0,1)$ random variables, and let $Z_1 = \alpha \xi_1$ and $Z_2 = \beta \xi_1 +\gamma \xi_2$
where $\alpha, \beta, \g \in \C\setminus\{0\}$. Let $1<p<2$.
Then
\[
\E \left[ \ |Z_1 Z_2|^{-p}\ \right] \le |\alpha \gamma|^{-p}\ \Gamma \left( 1-\frac {p}{2} \right)^2
\]
\end{lem}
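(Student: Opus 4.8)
The plan is to condition on $\xi_1$ and thereby reduce the whole estimate to a negative-moment bound for a single complex Gaussian whose mean I do not control. Since $|Z_1|^{-p}=|\alpha|^{-p}|\xi_1|^{-p}$ is measurable with respect to $\xi_1$, and conditionally on $\xi_1$ the variable $Z_2=\beta\xi_1+\gamma\xi_2$ is complex Gaussian with mean $\mu=\beta\xi_1$ and variance $|\gamma|^2$ (because $\xi_2\sim\calN_\C(0,1)$ is independent of $\xi_1$), I would write
\[
\E\left[|Z_1 Z_2|^{-p}\right]=|\alpha|^{-p}\,\E\left[|\xi_1|^{-p}\,\E\bigl(|Z_2|^{-p}\,\big|\,\xi_1\bigr)\right].
\]
The lemma then follows from two facts: (a) the conditional factor $\E(|Z_2|^{-p}\mid\xi_1)$ is bounded, \emph{uniformly in the value of $\xi_1$}, by its value at mean zero, namely $|\gamma|^{-p}\Gamma(1-\tfrac p2)$; and (b) for a standard complex Gaussian $\xi\sim\calN_\C(0,1)$ one has $\E|\xi|^{-p}=\Gamma(1-\tfrac p2)$. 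Granting both, the displayed expectation is at most $|\alpha|^{-p}\Gamma(1-\tfrac p2)\cdot|\gamma|^{-p}\Gamma(1-\tfrac p2)$, which is exactly the claimed bound.

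The heart of the argument is (a): shifting the mean of a fixed-variance complex Gaussian can only decrease the negative moment $\E|W|^{-p}$. I would prove this through the subordination identity
\[
|z|^{-p}=\frac{1}{\Gamma(p/2)}\int_0^\infty t^{p/2-1}e^{-t|z|^2}\,dt,\qquad z\in\C\setminus\{0\},
\]
which converts the singular weight into a superposition of Gaussians and reduces everything to ordinary Laplace transforms. Taking $W\sim\calN_\C(\mu,\sigma^2)$ and writing $\mu=\mu_1+i\mu_2$, the real and imaginary parts of $W$ are independent real Gaussians of variance $\sigma^2/2$ centered at $\mu_1$ and $\mu_2$, so $\E e^{-t|W|^2}$ factors across the two coordinates. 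The elementary one-dimensional computation, for a real $\eta\sim N(m,s^2)$, gives
\[
\E e^{-t\eta^2}=\frac{1}{\sqrt{1+2ts^2}}\exp\!\left(-\frac{tm^2}{1+2ts^2}\right)\le\frac{1}{\sqrt{1+2ts^2}},
\]
the right-hand side being the value at $m=0$. Hence $\E e^{-t|W|^2}\le\E e^{-t|W_0|^2}$ for \emph{every} $t>0$, where $W_0\sim\calN_\C(0,\sigma^2)$; integrating against the positive weight $t^{p/2-1}/\Gamma(p/2)$ yields $\E|W|^{-p}\le\E|W_0|^{-p}$, uniformly in $\mu$, which is (a).

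Fact (b) is a direct computation: for $\xi\sim\calN_\C(0,1)$ the squared modulus $|\xi|^2$ is $\mathrm{Exp}(1)$-distributed, so $\E|\xi|^{-p}=\int_0^\infty r^{-p/2}e^{-r}\,dr=\Gamma(1-\tfrac p2)$, the integral converging precisely because $p<2$. Applying (a) with $\sigma^2=|\gamma|^2$ gives $\E(|Z_2|^{-p}\mid\xi_1)\le|\gamma|^{-p}\Gamma(1-\tfrac p2)$ for every value of $\xi_1$, and combining with (b) for the outer $\E|\xi_1|^{-p}$ factor completes the estimate. The one delicate point to get right is the uniformity in (a): it is essential that the bound on the conditional moment not depend on $\mu=\beta\xi_1$, since that quantity is itself random; the subordination route makes this transparent, because the mean enters only through the manifestly sub-unit factor $\exp(-tm^2/(1+2ts^2))$. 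An alternative proof of (a) via the Riesz rearrangement inequality (the convolution of two radially decreasing functions is maximized at the origin) is available, but I find the Laplace-transform argument cleaner and entirely self-contained.
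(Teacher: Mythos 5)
Your proof is correct, and it shares the paper's overall skeleton --- integrate out (or condition on) $\xi_2$ with $\xi_1$ fixed, bound the inner expectation uniformly in $\xi_1$ by its value at mean zero, then compute the two resulting centered negative moments as $\Gamma(1-\tfrac p2)$ --- but the mechanism you use for the crucial uniform bound is genuinely different. The paper writes $\E[|Z_1Z_2|^{-p}]$ as a double integral over $\C^2$ and disposes of the inner integral in one stroke with the Hardy--Littlewood rearrangement inequality: replacing $|\beta\xi_1+\g\xi_2|^{-p}$ by its symmetric decreasing rearrangement $|\g\xi_2|^{-p}$ against the radial weight $e^{-|\xi_2|^2}$ can only increase the integral, which immediately gives the bound $|\g|^{-p}\Gamma(1-\tfrac p2)$ independently of $\xi_1$. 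You instead prove the same monotonicity statement (shifting the mean of a fixed-variance complex Gaussian decreases $\E|W|^{-p}$) through the subordination identity $|z|^{-p}=\Gamma(p/2)^{-1}\int_0^\infty t^{p/2-1}e^{-t|z|^2}\,dt$ combined with the explicit Laplace transform of a squared real Gaussian, where the mean enters only through the factor $\exp\bigl(-tm^2/(1+2ts^2)\bigr)\le 1$. What your route buys is self-containedness: it needs nothing beyond Tonelli and completing a square, and it makes the uniformity in $\mu=\beta\xi_1$ quantitatively explicit, since the damping factor is computed exactly rather than bounded by a symmetrization principle. What the paper's route buys is brevity and generality: with the rearrangement inequality as a black box the estimate is two lines, and the identical argument works for any decreasing radial kernel $\Phi(|z|)$ in place of $|z|^{-p}$ and any radially decreasing weight in place of the Gaussian. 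Both arguments are sound and yield exactly the stated constant $|\alpha\g|^{-p}\,\Gamma\bigl(1-\tfrac p2\bigr)^2$.
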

We include the proof in Section~\ref{sec: lems}.
We apply Lemma~\ref{lem: for small r} with $Z_1=f(t+ia)$ and $Z_2 = f(s+ib)$, which yields the choice of parameters $\al, \beta, \g$ so that
\begin{align}\label{eq: choice}
&\al=\sqrt{r(2ia)}, \quad
\alpha \ov{\beta} = \ov{r(t-s+ia+ib)},\quad
|\beta|^2+|\gamma|^2 =r(2ib).
\end{align}
In particular,
\[
| \alpha \gamma| = \sqrt{r(2ia) r(2ib) -| r(t-s+ia+ib)|^2 } .
\]
Using this in Lemma~\ref{lem: for small r}, we have:
\begin{align*}
\iint_A & \left( \E \bigg| f(t+ia) f(s+ib) \bigg|^{-p} \right)^{1/p} dt\  ds \\
& \lesssim \iint_A\left(r(2ia) r(2ib) -| r(t-s+ia+ib)|^2 \right)^{-p/2} dt \ ds,
\end{align*}
which is bounded by the definition of $A$.
In order to bound the integration on $A^c$, we use another lemma (which is also proved in Section~\ref{sec: lems}).

\begin{lem}\label{lem: for big r}
Suppose $\xi_1$, $\xi_2$ are independent $\calN_\C(0,1)$ random variables, and let $Z_1 = \alpha \xi_1$ and $Z_2 = \beta \xi_1 +\gamma \xi_2$
where $\alpha, \beta, \g \in \C\setminus\{0\}$. Suppose $M>0$ is such that $|\frac{\gamma}{\beta}| < M$, and let $1<p<2$. Then there exists a constant $c>0$, depending only on $M$ and $p$, such that
\[
\E \left[ \frac 1 {| Z_1 Z_2 |^p }\right] \le \frac {c} {|\alpha \beta|^p} \left|\frac{\gamma}{\beta}\right|^{2-2p}.
\]
\end{lem}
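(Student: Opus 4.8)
The plan is to reduce everything by homogeneity to a single universal integral and then to control one radial profile function. Writing $w=\gamma/\beta$, we have $Z_1Z_2=\alpha\beta\,\xi_1(\xi_1+w\xi_2)$, so that
$$\E\left[\frac 1{|Z_1Z_2|^p}\right]=\frac 1{|\alpha\beta|^p}\,\E\left[|\xi_1|^{-p}\,|\xi_1+w\xi_2|^{-p}\right],$$
and it remains to bound the last expectation by $c\,|w|^{2-2p}$ with $c=c(p)$. First I would condition on $\xi_1$ and integrate out $\xi_2$. Given $\xi_1$, the variable $\xi_1+w\xi_2$ is a complex Gaussian with mean $\xi_1$ and variance $|w|^2$, and by rotational invariance of $\xi_2$ the conditional expectation depends only on $|\xi_1|$; factoring out the scale $|w|$ gives
$$\E\bigl[\,|\xi_1+w\xi_2|^{-p}\bigm|\xi_1\,\bigr]=|w|^{-p}\,g\!\left(|\xi_1|/|w|\right),\qquad g(t):=\E\,|\xi_2+t|^{-p}.$$

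Next I would substitute $v=|w|s$ in the remaining Gaussian integral over $\xi_1$. A direct computation gives
$$\E\left[|\xi_1|^{-p}\,|\xi_1+w\xi_2|^{-p}\right]=|w|^{2-2p}\,J(|w|),\qquad J(|w|)=\frac1\pi\int_\C |s|^{-p}\,g(|s|)\,e^{-|w|^2|s|^2}\,ds,$$
and since $e^{-|w|^2|s|^2}\le 1$ we have $J(|w|)\le J_0:=\frac1\pi\int_\C |s|^{-p}g(|s|)\,ds$, uniformly in $w$. Thus the whole lemma reduces to showing $J_0<\infty$, with $c=J_0$ then depending only on $p$ (hence, a fortiori, only on $M$ and $p$; note that the hypothesis $|w|<M$ is in fact not needed for this route).

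The heart of the matter is then a two-sided control of the profile $g(t)=\E\,|\xi_2+t|^{-p}$. Near $t=0$, $g$ is bounded, with $g(t)\le g(0)=\Gamma(1-\tfrac p2)<\infty$, because the singularity $|u|^{-p}$ is integrable in the plane precisely when $p<2$ (the inequality $g(t)\le g(0)$ follows, for instance, from the fact that $g$ is the convolution of two radially decreasing functions and is therefore itself radially decreasing, or simply from continuity on compacta). For large $t$ I would split the defining integral according to whether $|u|>t/2$ or $|u|\le t/2$: on $|u|>t/2$ one bounds $|u|^{-p}\lesssim t^{-p}$ and integrates the Gaussian to $1$, while on $|u|\le t/2$ one has $|u-t|\ge t/2$, so the Gaussian weight is exponentially small; together these give $g(t)\lesssim t^{-p}$. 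Feeding these estimates into $J_0$, the integrand $|s|^{-p}g(|s|)$ is $\lesssim |s|^{-p}$ near the origin (integrable in $\C$ since $p<2$) and $\lesssim |s|^{-2p}$ at infinity (integrable in $\C$ since $p>1$); hence $J_0<\infty$ exactly in the range $1<p<2$. The main obstacle is precisely this tail estimate $g(t)\lesssim t^{-p}$: the crude bound $g\le g(0)$ would only yield the weaker exponent $|w|^{-p}$ in place of $|w|^{2-2p}$, so one genuinely needs the polynomial decay of $g$ at infinity, and it is the competition between the inner singularity (forcing $p<2$) and this outer decay (forcing $p>1$) that pins the convergence of $J_0$ to the stated range.
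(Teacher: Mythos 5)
Your proof is correct, and it takes a genuinely different --- and cleaner --- route than the paper's. Both arguments factor out $|\alpha\beta|^{-p}$ and condition on $\xi_1$, but the paper then bounds the inner $\xi_2$-integral pointwise via a three-region splitting and estimates the three resulting $z$-integrals separately; in each of them an $O(1)$ term must be absorbed into $|\gamma/\beta|^{2-2p}$, which (since $2-2p<0$) is possible only when $|\gamma/\beta|$ is bounded above --- this is exactly where the hypothesis $|\gamma/\beta|<M$ and the $M$-dependence of the constant enter. You instead exploit exact homogeneity: rotational invariance reduces the conditional expectation to $|w|^{-p}g(|\xi_1|/|w|)$ with a single profile $g(t)=\E|\xi_2+t|^{-p}$, and the substitution $z=|w|s$ extracts the factor $|w|^{2-2p}$ exactly, leaving one universal integral $J_0$ to control. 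Your two estimates on $g$ --- boundedness near the origin by the Hardy-Littlewood rearrangement inequality (the same inequality the paper uses to prove Lemma~\ref{lem: for small r}), and the tail bound $g(t)\lesssim t^{-p}$, whose region splitting is the analogue of the paper's --- are both correct, and $J_0<\infty$ indeed holds precisely for $1<p<2$. What your organization buys is a strictly stronger statement: the constant $c=J_0$ depends only on $p$, so the hypothesis $|\gamma/\beta|<M$ is superfluous, as you observe; the paper's proof does not give this uniformity. One cosmetic remark: the inequality $g(t)\le g(0)$ does not follow from ``continuity on compacta'' alone (continuity only gives boundedness on compacta, which is in fact all you need for $J_0<\infty$); the rearrangement argument is the right justification, so this is a matter of phrasing rather than a gap.
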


We apply this lemma again to $Z_1=f(t+ia)$ and $Z_2=f(s+ib)$, so the choice of parameters in~\eqref{eq: choice} remains valid. Thus,
\[
\left| \frac {\g}{\beta}\right|^2 = \frac{ r(2ia)r(2ib)}{|r(t-s+ia+ib)|^2}-1
\]
is uniformly bounded for $(t,s)\in A^c$.
Applying Lemma~\ref{lem: for big r} we get that for some $c>0$,
\begin{align}\label{eq: Ac}
 \iint_{A^c}  &\left( \E \bigg| f(t+ia) f(s+ib) \bigg|^{-p} \right)^{1/p}  dt\  ds\notag  \\
&\lesssim \iint_{A^c} \left( \frac c {|r(t-s+ia+ib)|^p} \ \frac {(r(2ia)r(2ib)-|r(t-s+ia+ib)|^2)^{1-p} }{|r(t-s+ia+ib)|^{2-2p}} \right)^{\frac 1 p} dt \ ds   \notag \\
&\lesssim \iint_{A^c} \left(r(2ia)r(2ib)-|r(t-s+ia+ib)|^2 \right)^{-\frac{p-1}{p} } dt\ ds \\
& \lesssim \int_{ \tilde A^c} \left(r(2ia)r(2ib)-|r(x+ia+ib)|^2 \right)^{-\frac{p-1}{p} } dx \notag,
\end{align}
where $\tilde A^c$ is the one-dimensional set
\[
\tilde A^c = \{x\in [-T,T]:\: |r(x+ia+ib)|^2 > \frac 2 3 r(2ia) r(2ib)\} .
\]
The last inequality in~\eqref{eq: Ac} is obtained by a simple change of varibales. One step before that in~\eqref{eq: Ac}
we bounded $|r(t-s+ia+ib)|^{-1}$ from above by a constant, using the definition of $A^c$.

Before continuing, we notice that
\begin{align}\label{eq: CS}
|r(x+ia+ib)|^2 &=\left|\int_\R e^{-2\pi i x\lm} e^{2\pi\cdot (a+b) \lm} d\rho(\lm)\right|^2  \notag\\
& \le \left(\int_\R e^{2\pi\cdot (a+b) \lm} d\rho(\lm)\right)^2   \quad ( = r(ia+ib)^2  ) \notag \\
& \le \int_\R e^{2\pi\cdot 2a \lm} d\rho(\lm) \cdot \int_\R e^{2\pi\cdot 2a \lm} d\rho(\lm) \notag \\
&  = r(2ia) r(2ib) ,
\end{align}
and the inequality is sharp when $a\ne b$ (see also part 2 of Lemma~\ref{lem: q} below).
Therefore, if $a\ne b$, the  last integral in~\eqref{eq: Ac} is finite.

In case $a=b$, there may be only a finite number of isolated points $x_0$ for which $|r(x_0+2ia)|^2=r(2ia)^2$. Taylor expansion near any of those points gives $|r(x+2ia)|^2= r(2ia)^2-C(x-x_0)^2 + o((x-x_0)^2)$ as $x$ tends to $x_0$ (here $C\ge 0$ since $|r(x+2ia)|^2 \le r(2ia)^2$ by taking $a=b$ in~\eqref{eq: CS}).
So, in this case the finiteness of the integral~\eqref{eq: Ac} is equivalent to that of
$$\int_{|x-x_0|<\delta} (x-x_0)^{-2(p-1)/p} dx$$
(with some $\delta>0$), which is
indeed finite for $1<p<2$.

\subsection{Proof of Lemma~\ref{lem: deriv}}
We will justify in detail the second item, as the proof of the first is similar and simpler.

Fix $(t,s)\in[-T,T]^2$. We may assume that $|r(t-s +ia+ib)|^2< {r(2ia)r(2ib)}$ as the set of $(t,s)\in[-T,T]^2$
where this inequality does not hold is of measure zero (see discussion following~\eqref{eq: CS}).
Consider the random variables
\begin{align*}
U &(h_1,h_2) =\frac {\log |f(t+ia+ ih_1)|-\log |f(t+ia)|}{h_1}
\cdot \frac {\log |f(s+ib+ ih_2)|-\log |f(s+ib)|}{h_2}.
\end{align*}
These are well-defined for $0<h_1<\delta_1$ and $0<h_2<\delta_2$, where $\delta_1,\delta_2$ are properly chosen numbers (we use here that almost surely, there are no zeroes on the vertical lines $\{t\}\times [a,a+\delta_1]$ and $\{s\}\times [b,b+\delta_2]$, as was explained in Section~\ref{sec: significant edges}).
Notice that almost surely,
\[
\lim_{h_1\to 0+} \lim_{h_2\to 0+} U(h_1,h_2) = \ptwo{a}{b} [ \ \log |f(t+ia)| \log |f(s+ib)|\  ].
\]
Our goal \ref{eq: j2} can be understood as convergence in $L^1(\Pro)$ of the above limit. This will follow if the family
$\{  U(h_1,h_2) \}_{h_1,h_2}$ is uniformly integrable, i.e., if for every $\ep>0$ there exists $k>0$ such that for all relevant $h_1, h_2$ it holds that
\begin{equation*}
\E \left( |U(h_1,h_2)|\ind_{\{ |U(h_1,h_2)|\ge k \}}  \right)  < \ep.
\end{equation*}
Uniform integrability, in turn, would follow from the following statement:
\footnote{Indeed, suppose~\eqref{eq: uniform p bound} holds. Denoting by $q$ the number such that $\frac 1 p + \frac 1 q =1$, we apply H\"{o}lder's inequality  to get:
\[
\E\big(  |U(h_1,h_2)|\ind\{ |U(h_1,h_2)| \ge k \}  \big) \le \left( \E |U(h_1,h_2)|^p \right)^{1/p} \Pro\left(|U(h_1,h_2)|\ge k \right)^{1/q}
\lesssim \frac 1{k^{p/q}},
\]
so the definition of uniform integrability is satisfied.
}

\begin{equation}\label{eq: uniform p bound}
\exists p>1: \: \sup_{h_1,h_2} \E |U(h_1,h_2)|^p < \infty.
\end{equation}

Applying the Newton-Leibniz formula, the bound~\eqref{eq: d log} and Jensen's inequality we get:
\begin{align*}
|U(h_1,h_2)| &= \left| \frac 1 {h_1 h_2} \int_0^{h_2} \int_0^{h_1} \pone{y_1}\log |f(t+ia+iy_1)| \pone{y_2} \log|f(s+ib+iy_2)|  dy_1 \, dy_2 \right|  \\
&\le \int_{0}^{h_2} \int_0^{h_1} \left|  \frac{f'}{f}(t+ia+iy_1) \right| \, \left|  \frac{f'}{f}(s+ib+iy_1) \right|\frac { dy_1}{h_1} \, \frac {dy_2}{h_2} \\
& \le\left( \int_{0}^{h_2} \int_0^{h_1} \left|  \frac{f'}{f}(t+ia+iy_1) \right|^p \, \left|  \frac{f'}{f}(s+ib+iy_1) \right|^p\frac { dy_1}{h_1} \, \frac {dy_2}{h_2}\right)^{1/p }.
\end{align*}
Taking $1<p'<\frac 2 p$ and $q'$ such that $\frac 1 {p'} + \frac 1 {q'} = 1$, we apply H\"{o}lder's inqeuality to bound the last expression by
\begin{align*}
& \left( \int_0^{h_2}\int_0^{h_1} \E |f'(t+ia+iy_1) \ f'(s+ib+iy_2)|^{pq'}\,  \frac { dy_1}{h_1} \ \frac {dy_2}{h_2} \right)^{\frac 1{pq'}} \\
&\quad \times \left( \int_0^{h_2}\int_0^{h_1} \E |f(t+ia+iy_1) \ f(s+ib+iy_2)|^{-pp'} \, \frac { dy_1}{h_1} \ \frac {dy_2}{h_2} \right) ^{\frac 1{pp'} }
\end{align*}
Using Cauchy-Schwarz, the first integral is bounded by
\[
\left( \max_{y_1\in [0,\delta_1]} \E |f'(t+ia+iy_1)|^{2pq'}\max_{y_2\in [0,\delta_2]} \E |f'(s+ib+iy_2)|^{2pq'}   \right)^{\frac 1 {2pq'}},
\]
which is finite and independent of $h_1$ and $h_2$. Applying Lemma~\ref{lem: for small r} with the same choice of parameters as before, we may bound the second integral (up to a constant factor depending on $p$ and $p'$) by:
\begin{align*}
&\left( \int_0^{h_1} \int_0^{h_2} \left\{ r(2ia+2iy_1) r(2ib+ 2iy_2)-| r(t-s+ia+ib+iy_2+iy_2) |^2 \right\} ^{-\frac {pp'} 2} \frac{dy_1 }{h_1} \frac {dy_2}{h_2} \right)^{\frac 1{pp'} }\\
&\lesssim  \max_{y_1\in[0,\delta_1], y_2\in[0,\delta_2]} \left\{ r(2ia+2iy_1) r(2ib+ 2iy_2)-| r(t-s+ia+ib+iy_2+iy_2) |^2 \right\} ^{-\frac 1 2},
\end{align*}
which is again finite and independent of $h_1$ and $h_2$.
Our proof is complete.

\subsection{Proofs of auxiliary Lemmas}\label{sec: lems}
This part is dedicated to prove Lemmas~\ref{lem: for small r} and~\ref{lem: for big r} that were used earlier in this section.
\begin{proof}[Proof of Lemma~\ref{lem: for small r}]
Using the notations in the statement of the Lemma, we have:
\[
\E\left[|Z_1 Z_2|^{-p} \right]
= \frac 1 {\pi^2} \iint_{\C^2} | \alpha \xi_1 ( \beta \xi_1 + \g \xi_2) |^{-p} e^{-|\xi_1|^2-|\xi_2|^2} dm(\xi_1) dm(\xi_2)
\]
Now, by the Hardy-Littlewood re-arrangement inequality, we have:
\begin{align*}
\frac 1 \pi & \int_\C |\beta \xi_1 + \g \xi_2|^{-p} e^{-|\xi_2|^2} dm(\xi_2)
\le |\g|^{-p}\cdot \frac 1 \pi \int_\C |\xi_2|^{-p} e^{-|\xi_2|^2} dm(\xi_2) \\
& = |\g|^{-p} \ \Gamma\left(1-\frac p 2\right).
\end{align*}

So,
\begin{align*}
\E\left[ |Z_1 Z_2|^{-p} \right] &\le |\al \g|^{-p} \ \Gamma\left(1-\frac p 2\right) \cdot \frac 1 \pi \int_\C |\xi_1|^{-p} e^{-|\xi_1|^2} dm(\xi_1) \\
&= |\al \g|^{-p}\ \Gamma\left(1-\frac p 2\right)^2.
\end{align*}

\end{proof}

\begin{proof}[Proof of Lemma~\ref{lem: for big r}]
In this proof, the constant hidden by the ``$\lesssim$'' and ``$\eqsim$'' notation depends only on $M$ and $p$.

We begin by writing-out the desired expectation explicitly.

\begin{align}\label{eq: start}
\E \left[\ |Z_1 Z_2|^{-p}\ \right] &= |\al \beta|^{-p}\ \E\left[\  \big|\xi_1^2+\frac \g \beta \xi_1 \xi_2\big|^{-p} \ \right]\notag \\
& = |\al \beta|^{-p} \cdot \frac 1{\pi^2} \iint_{\C^2}
\big|z^2 + \frac \g \beta z w \big|^{-p} e^{-|z|^2-|w|^2}\ dm(z) \ dm(w) \notag \\
& = |\al \beta|^{-p} \pi^{-2} \int_\C |z|^{-p}
\left( \int_\C \big|z+\frac \g \beta w\big|^{-p} e^{-|w|^2} dm(w) \right)e^{-|z|^2} dm(z).
\end{align}

We bound the inner integral as follows:
\begin{align*}
& \int_\C   \big|z+\frac \g \beta w\big|^{-p} e^{-|w|^2} dm(w) \\
& \lesssim \int_{\substack{|w|\le \frac 1 2 \big| \frac{\beta}{\g} z \big|}  } |z|^{-p} e^{-|w|^2} dm(w)
+ |z|^{-p}  e^{-\frac 1 4 \big| \frac{\beta}{\g} z \big|^2} \left| \frac {\beta} {\g} z \right|^2
 +\int_{\substack{|w|> 2\big| \frac{\beta}{\g} z \big| } } \bigg| \frac{\g}{\beta} w\bigg|^{-p} e^{-|w|^2} dm(w) \\
& \eqsim |z|^{-p} \left(1- e^{-\frac 1 4 \big| \frac{\beta}{\g} z \big|^2}\right)
+\bigg|\frac \beta \g \bigg|^{2}|z|^{2-p}  e^{-\frac 1 4 \big| \frac{\beta}{\g} z \big|^2}
+\bigg| \frac \beta \g \bigg|^{p} I\left(\bigg|\frac {\beta}{\g} z\bigg| \right),
\end{align*}
where
\[
I\left(s\right) = \int_{|w|>2s} |w|^{-p} e^{-|w|^2} dm(w) \lesssim
\begin{cases}
1, & 0<s\le 1, \\
s^{-p}e^{-4s^2}, & s>1.
\end{cases}
\]

 The last bound is achieved by changing to polar coordinates, as follows:
\[
I(s) \eqsim \int_{2s}^\infty r^{-p+1} e^{-r^2} dr \lesssim s^{-p+1}  \int_{2s}^\infty e^{-r^2} dr \le
s^{-p+1} \frac 1 {2s} e^{-4s^2}.
\]

Returning to the double integral in~\eqref{eq: start}, we have:
\begin{align*}
&\E \left[ \big|\xi_1^2 + \frac \g \beta \xi_1 \xi_2 \big|^{-p} \right] \\
& \lesssim \int_\C \left\{ |z|^{-2p}\left(1-e^{-\frac 1 4 \big|\frac \beta \g z\big|^2 } \right)
+ \bigg|\frac \beta \g \bigg|^{2} |z|^{2-2p}e^{-\frac 1 4 \big| \frac \beta \g z \big|^2 }
+|z|^{-p} \left|\frac \beta \g \right|^p I\left( \bigg|\frac \beta \g z \bigg| \right) \right\} e^{-|z|^2} dm(z)
\end{align*}

This is the sum of three integrals, which we bound separately. For the first, we have:
\begin{align*}
\int_\C  &|z|^{-2p} \left(1-e^{-\frac 1 4 \big| \frac \beta \g z \big|^2} \right) e^{-|z|^2} dm(z) \\
&\lesssim \int_{|z|\le \big|\frac \g \beta \big|} \left|\frac \beta \g \right|^2 |z|^{2-2p} dm(z)
+\int_{|z|>\big|\frac \g \beta \big|} |z|^{-2p} e^{-|z|^2} dm(z) \\
& \eqsim \left|\frac \beta \g \right|^2 \left|\frac \g \beta \right|^{4-2p} + O(1) \eqsim \left|\frac \g \beta \right|^{2-2p}
\end{align*}

Denote $A=1+\frac 1 4 \big| \frac \beta \g \big|^2$.
Before estimating the second integral, we compute
\begin{align*}
\int_\C & |z|^{2-2p} e^{-A|z|^2 }dm(z)  \quad &  \\
&\eqsim\int_0^\infty r^{2-2p}e^{-A r^2} r dr \quad & [r = |z| \ ] \\
& = \frac 1 {2A} \int_0^\infty \left(\frac s A\right)^{1-p} e^{-s} ds \quad & [s = A r^2] \\
&\eqsim A^{-(2-p)}. &
\end{align*}
Thus, the second integral is
\begin{align*}
\left|\frac \beta \g\right|^{2} &\int_\C |z|^{2-2p} e^{-\left(1+\frac 1 4 \big|\frac \beta \g \big|^2\right)|z|^2 }dm(z)
\eqsim \bigg| \frac \beta \g \bigg|^{2} \left(1+\frac 1 4 \bigg| \frac \beta \g \bigg|^2 \right)^{-(2-p)}
\eqsim \left| \frac \g \beta \right|^{2-2p}.
\end{align*}

For the third integral we first compute
\begin{align*}
\int_{|z| > |\frac \g \beta|} & |z|^{-2p} e^{-A|z|^2 }dm(z)  \quad &  \\
&\eqsim\int_{|\frac \g  \beta|}^\infty r^{-2p}e^{-A r^2} r dr \quad & [r = |z| \ ] \\
& = \frac 1 {2A} \int_{A|\frac \g \beta|}^\infty \left(\frac s A\right)^{-p} e^{-s} ds \quad & [s = A r^2] \\
&\lesssim A^{p-1} \int_{1/ 4}^\infty  s^{-p}e^{-s} ds \eqsim A^{p-1}.&
\end{align*}

Finally, the third integral is
\begin{align*}
\bigg| & \frac \beta \g\bigg|^p \int_\C |z|^{-p}\, I\left(\bigg|\frac{\beta}{\g}z\bigg|\right) e^{-|z|^2} dm(z) \\
&\eqsim \bigg| \frac \beta \g \bigg|^p \left\{\int_{|z|<|\frac \g \beta|} |z|^{-p} e^{-|z|^2} dm(z)
+\bigg|\frac \beta \g \bigg|^{-p} \int_{|z|> |\frac \g \beta| }|z|^{-2p} e^{-(1+4\big|\frac \beta \g\big|^2)|z|^2} dm(z) \right\} \\
&\eqsim \bigg|\frac \beta \g\bigg|^p \ \bigg| \frac \g \beta \bigg|^{2-p}  
+ \left(1+4 \bigg| \frac \beta \g \bigg|^2 \right)^{p-1} \eqsim \bigg| \frac \g \beta \bigg|^{2-2p}.
\end{align*}
The proof is complete.

\end{proof}

\section{Moments of the characteristic function}\label{sec: CLT}
Here we prove Lemma~\ref{lem: CLT}, which estimates moments of the characteristic function (or Fourier transform) of a probability distribution.
We adapt the proof of the Central Limit Theorem appearing in~\cite[Ch. XV.5]{Fe}.
\begin{proof}[Proof of Lemma~\ref{lem: CLT}]
Write $G(x)=\FF[g](x)$. We may assume that $\int_\R \lm g(\lm)=0$ (otherwise
we shall consider, instead of $g$, the function $g_{\mu}(\lm)=g(\lm+\mu)$
where $\mu:=\int_\R \lm g(\lm)d\lm$. There is no penalty since
$|\FF[g_\mu](x)|=|\FF[g](x)|$ for all $x\in\R$). By assumption~\ref{item:
moment3}, $G(x)$ is thrice differentiable, and by the above assumptions
$G(0)=1$ and $G'(0)=0$.

To prove the lemma, it is enough to show that
$$\lim_{m\to \infty} \sqrt{m} \int_{\R} |G(x)|^m dx \text{ exists and is finite.}$$
Notice that $ \sqrt{m} \int_{\R} |G(x)|^m dx = \int_\R |G\left(x/{\sqrt
m}\right)|^m dx,$ and so it is enough to show that
\begin{equation}\label{eq: CLT goal}
\lim_{m\to \infty }\int_{\R } \left|\ \left|G\left(\frac{x}{\sqrt
m}\right)\right|^m - e^{-\frac{\alpha x^2}{2}}   \right| \ dx = 0,
\end{equation}
for some value of $\alpha>0$, which in fact is $\alpha:=G''(0)$.

We shall achieve~\eqref{eq: CLT goal} by splitting the integral into three
parts, and showing each could be made less than a given $\ep>0$ if $m \ge
\nu$ is chosen large enough.

Fix $R>0$ (to be determined later). By Taylor expansion,
\begin{equation}\label{eq: Taylor} G(x)=G(0)+x
G'(0)+\frac{x^2}{2}G''(0)+o(x^2) = 1 +\frac{\alpha x^2}{2}+o(x^2),\:
x\to 0
\end{equation} and so $|G(x/\sqrt{m})|^m \to e^{-\alpha
x^2/2}$ as $m\to \infty$, uniformly in $x\in[-R,R]$. Thus the integral
in~\eqref{eq: CLT goal} computed on $[-R,R]$ converges to zero as $m\to
\infty$.

From the expansion~\eqref{eq: Taylor} we get
$$\exists \delta>0 \ \forall |x|<\delta: \ |G(x)|\le e^{-\frac{\alpha x^2}{4}}. $$
Consider the integration in~\eqref{eq: CLT goal} for $R \le |x|\le
\delta\sqrt{m}$. For such $x$ we have $\left|G(x/\sqrt m )\right|^m \le
e^{-\frac {\alpha x^2}{4}}$, and so the integrand is less than
$2e^{-\frac{\alpha x^2}{4}}$. Choosing $R$ so that $4\int_{R}^\infty
e^{-\frac{\alpha x^2}{4}}<\ep$ will satisfy our needs.

Lastly, consider the integration on $\delta\sqrt{m}\le |x|<\infty$. By
properties of Fourier transform, $\eta:=\sup_{|x|\ge \delta} |G(x)| \in
(0,1)$. Thus

$$ \int_{|x|\ge \delta \sqrt{m}} \left|\  \left|G\left(\frac{x}
{\sqrt m}\right)\right|^m - e^{-\frac{\alpha x^2}{2}}\right|\ dx \le
\eta^{m-\nu}\ \sqrt{m}\int_{\R}|G|^\nu + \int_{|x|\ge \delta\sqrt{m}}e^{-\frac {\alpha x^2}{2}}dx < \ep,
$$
for $m$ large enough. Here we have used condition~\ref{item: G integ}.

\end{proof}

\end{document}